\newtheorem{Thm}{Theorem}
\newtheorem{theorem}{Theorem}[section] 
\newtheorem{lemma}[theorem]{Lemma}     
\newtheorem{corollary}[theorem]{Corollary}
\newtheorem{Prop}[theorem]{Proposition}
\newtheorem*{corollary*}{Corollary}
\theoremstyle{definition}
\newtheorem{defi}[theorem]{Definition}
\newtheorem{fact}[theorem]{Fact}
\newcommand{\al} {\alpha}       
\newcommand{\be} {\beta}
\newcommand{\de} {\delta}
\newcommand{\vep}{\varepsilon}
\newcommand{\vphi}{\varphi}
\newcommand{\la} {\lambda}
\newcommand{\si} {\sigma}
\newcommand{\wh}{\widehat}
\newcommand{\wt}{\widetilde}
\newcommand{\SA}{{\mathcal A}}
\newcommand{\SM}{{\mathcal M}}
\newcommand{\SO}{{\mathcal O}}
\newcommand{\SP}{{\mathcal P}}
\newcommand{\SQ}{{\mathcal Q}}
\newcommand{\SU}{{\mathcal U}}
\newcommand{\SW}{{\mathcal W}}
\newcommand{\N}{\mathbb{N}}
\newcommand{\R}{\mathbb{R}}
\newcommand{\Z}{\mathbb{Z}}
\newcommand{\flowF}{\Phi}
\newcommand{\tmap}{\vphi^t}
\newcommand{\smap}{\vphi^s}
\newcommand{\hsmap}{\vphi^{h(s)}}
\newcommand{\Erg}{\operatorname{\SM_\Phi^e}}
\newcommand{\Mes}{\operatorname{\SM_\Phi}}
\newcommand{\fk}{\operatorname{\rho_{FK}}}
\newcommand{\pfk}{\operatorname{\rho^{1}_{FK}}}
\newcommand{\fkflow}{\operatorname{\tilde{\rho}_{FK}}}
\newcommand{\Part}{\operatorname{\mathbf{P}}}
\newcommand{\ack}{{\bf Acknowledgements. }}
\newcommand{\supp}{\operatorname{supp}}
\newcommand{\gen}{\operatorname{Gen}}
\numberwithin{equation}{section}
\begin{document}
	
	\title[A characterization of zero entropy loosely Bernoulli flows]{A characterization of zero entropy loosely Bernoulli flows via FK-pseudometric}
	
	\author{Alexandre Trilles}
	\address{Doctoral School of Exact and Natural Sciences, Jagiellonian University, ul. {\L}ojasiewicza 11, 30-348 Krak\'ow, Poland
		\& Faculty of Mathematics and Computer Science, Jagiellonian University, ul. {\L}ojasiewicza 6, 30-348 Krak\'ow, Poland.}

	\email{alexandre.trilles@doctoral.uj.edu.pl}
	
	\thanks{The author was supported by NCN Polonez Bis grant no. 2021/43/P/ST1/02885.} 
	

	\subjclass[2020]{Primary 37A35, 37A10; Secondary 37B05}
	

	\keywords{}
	
	\begin{abstract}
		We introduce the Feldman-Katok pseudometric (FK-pseudometric for short) for flows. We then provide a characterization of zero entropy loosely Bernoulli measures for continuous flows via the FK-pseudometric extending the result known for discrete-time dynamical systems. We also provide a purely topological characterization of uniquely ergodic continuous flows whose unique invariant measure is zero entropy loosely Bernoulli.
		
	\end{abstract}
	
	\maketitle
	
	\section{Introduction}
	
	The theory of dynamical systems originated historically from the study of flows generated by differential equations. Poincar\'e used the transformation known today as Poincar\'e map to reduce study of flows to the discrete-time case. There are several ways of passing from continuous to discrete-time dynamics, however this reduction is not always direct nor obvious, which justifies the study of flows.
	
	Despite the strong connection between continuous and discrete-time dynamics, it is sometimes not immediate to find direct analogies between these settings. For example, the definition of expansivity for homeomorphisms turns out to be unfit for continuous flows. Indeed, no non-trivial flow satisfies the direct translation of the discrete-time definition \cite{Bowen-Walters}. 
	Therefore, a lot of work has been done to define expansivity for flows so that it behaves similarly to the discrete-time notion. The first definition of expansivity that appeared for flows (due to Bowen and Walters \cite{Bowen-Walters}), for example, preserves certain results known for homeomorphisms, but it does not work well with flows with singularities, where the so-called $k^*$-expansivity (due to Komuro \cite{Komuro}) is more suitable. 
	In summary, in order to find an analogue definition, several proposals were made, and each one of them preserves different results known for homeomorphisms on different classes of flows (see Section 1.7 in \cite{Hasselblatt-Fisher}). 
	In this work we propose a definition of a pseudometric for flows corresponding to the Feldman-Katok pseudometric introduced in \cite{Kwietniak-Lacka} for discrete-time systems and we show that it preserves results obtained in \cite{KG-R}.
	
	Two  topologically conjugated discrete-time systems have the same dynamics from the topological perspective. Unfortunately, in the continuous-time case topological conjugacy seems to be too restrictive. A very small time-change of a continuous flow might not be conjugated to the initial one. In this setting a weaker form of equivalence called (topological) orbit equivalence seems to be more natural. Orbit equivalence guarantees similarity between a flow and its time-changes. However, it fails to preserve certain topological properties and quantities such as mixing and entropy. 
	The equivalence relations can also be considered from the measure-theoretical viewpoint and equivalent systems are supposed to share the same dynamics from the perspective of ergodic theory. Unfortunately, the measurable orbit equivalence becomes too weak. Dye, in \cite{Dye_1} and \cite{Dye_2}, showed that any two ergodic automorphisms are measurably orbit equivalent and his result can be extended to the context of ergodic flows by using cross-sections.
	
	An intermediate equivalence relation, originally called weak isomorphism, was introduced by Kakutani \cite{Kakutani}. Nowadays it is known as Kakutani equivalence or monotone equivalence. Two flows are Kakutani equivalent if there is an isomorphism between one flow and an $L^1$ time-change of the other (see Section \ref{erg_theory} for the precise definition). Using Abramov's formula we see that this equivalence relation preserves complexity of systems, meaning that classes of zero, positive but finite and infinite entropy are preserved under Kakutani equivalence, but it is important to emphasize that entropy is not an invariant under Kakutani equivalence.
	
	In the opposite direction, a remarkable result on the isomorphism problem, due to Ornstein (see \cite{Ornstein_isomorphism}, \cite{Ornstein_isomorphism2}, \cite{Ornstein_isomorphism3}) says that Bernoulli shifts with the same entropy are isomorphic. In 1976, studying the isomorphism problem in ergodic theory Feldman \cite{Feldman_LB} introduced the definition of loosely Bernoulli automorphisms inspired by Ornstein's definition of very weak Bernoulli from \cite{Ornstein_isomorphism2}. 
	One first defines the property for finite partitions and then for automorphisms. While the notion of very weak Bernoulli partition is based on the normalized Hamming distance $\bar{d}_n$ between strings of length $n$, the notion of loose Bernoullicity is based on the weaker edit metric $\bar{f}_n$. These distances are related to dynamical pseudometrics for discrete-time systems, the first one called Besicovitch pseudometric and the second one the Feldman-Katok pseudometric (see \cite{Besicovitch_KLO}, \cite{Kwietniak-Lacka} for details).
	
	Surprisingly, a loosely Bernoulli system can have zero entropy. Indeed, Kronecker systems are loosely Bernoulli and even more, a system with zero entropy is loosely Bernoulli if and only if it is Kakutani equivalent to a Kronecker system. The study of the positive and the zero entropy case is done separately and we will focus on loosely Bernoulli systems with zero entropy. Following Ratner's suggestion we call them \emph{loosely Kronecker} (Katok called these systems standard \cite{Katok_discrete_spec}). 
	
	Roughly speaking, Kakutani \cite{Kakutani} conjectured that all zero entropy systems were loosely Kronecker. The conjecture turned out to be false and the first example of a zero entropy but not loosely Kronecker system was given by Feldman \cite{Feldman_LB} followed by results of Katok \cite{Katok_discrete_spec} and Ornstein, Rudolph and Weiss \cite{ ORW} who independently constructed uncountably many zero entropy systems that are not Kakutani equivalent one to another.
	
	In Katok's and Ornstein, Rudolph and Weiss' works, the study of loosely Kronecker flows is reduced to automorphisms. This is based on the fact that every ergodic measure-preserving flow with essentially no fixed points is isomorphic to a special flow over an ergodic transformation with an $L^1_+$ roof function (see \cite{Ambrose-Kakutani}). In the late 70's Ratner proved that horocycle flows are loosely Kronecker and that the Cartesian square of horocycle flows is not loosely Kronecker (see \cite{Ratner_horocycle}, \cite{Ratner_Cartesian}). Her proofs are again based on discrete-time results applied to cross-sections.
	
	In the early 80's there emerged results dealing directly with flows. The first one is due to Feldman \cite{Feldman_rentropy} who proved among many other things a characterization of loosely Kronecker flows. Inspired by his work, Ratner \cite{Ratner_invariants} introduced invariants of Kakutani equivalence that were used to show that different cartesian powers of horocycle flows are not Kakutani equivalent.
	
	In her last paper \cite{Ratner_2017}, Ratner presented a slightly different definition of her invariants from \cite{Ratner_invariants}. Inspired by Ratner's definition which can be seen as a continuous-time version of the $\bar{f_n}$-metric, we define the Feldman-Katok pseudometric (FK-pseudometric for short) for flows.
	
	We show that our pseudometric behaves similarly to the discrete-time one, allowing us to provide another characterization of loosely Kronecker continuous flows extending the results of Garc\'ia-Ramos and Kwietniak \cite{KG-R} from the discrete-time to the continuous-time setting. We denote the FK-pseudometric for flows by $\fkflow$.
	
	\begin{restatable}{thm}{mainthm}\label{thmA}
		Let $\flowF$ be a continuous flow on $X$ and $\mu$ be an ergodic $\flowF$-invariant measure. The measure-preserving flow $(X, \flowF, \mu)$ is loosely Kronecker if and only if there exists a Borel set $H \subseteq X$ such that $\mu(H) = 1$ and $\fkflow(x,y) = 0$ for every $x,y \in H$.
	\end{restatable}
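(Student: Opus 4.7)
The plan is to deduce Theorem~\ref{thmA} from its discrete-time analogue proved by Garc\'ia-Ramos and Kwietniak in \cite{KG-R}, passing from the flow $(X,\flowF,\mu)$ to a discrete base via the Ambrose-Kakutani representation. Since $\mu$ is ergodic and, after discarding a null set, the flow is essentially free of fixed points, the representation theorem of \cite{Ambrose-Kakutani} provides a measurable isomorphism to a special flow built over an ergodic automorphism $(Y,T,\nu)$ with an $L^1_+$ roof function $r$. The characterization of loose Kroneckerness in terms of the base transformation used in \cite{Katok_discrete_spec} and \cite{ORW} then ensures that $(X,\flowF,\mu)$ is loosely Kronecker precisely when $(Y,T,\nu)$ is a zero-entropy loosely Bernoulli automorphism.

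With this reduction in hand, the heart of the argument is the following comparison lemma, which I would prove first: for $\mu$-a.e.\ pair $(x,y) \in X \times X$,
\[
  \fkflow(x,y) = 0 \;\Longleftrightarrow\; \fk(\pi(x),\pi(y)) = 0,
\]
where $\pi : X \to Y$ is the projection onto the base coming from the Ambrose-Kakutani representation. Granting this lemma, Theorem~\ref{thmA} follows at once: the discrete-time characterization in \cite{KG-R} gives that $(Y,T,\nu)$ is zero-entropy loosely Bernoulli if and only if $\fk$ vanishes on the square of a full $\nu$-measure subset of $Y$, and via $\pi$ this condition translates into the required FK-condition on a full $\mu$-measure Borel subset $H \subseteq X$.

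To prove the comparison lemma I would construct near-optimal matchings in both directions. For the forward implication, starting from a continuous matching $h:A\subseteq[0,S]\to[0,S]$ witnessing $\fkflow(x,y)<\ep$, I would restrict $h$ to times at which both $\smap x$ and $\hsmap y$ cross the base $Y$. The Birkhoff ergodic theorem applied to the return-time function $r$ shows that such crossings occur at approximately linear density $1/\int r\,d\nu$ along both orbits, so the restriction yields a discrete order-preserving partial matching between initial segments of the $T$-orbits of $\pi(x)$ and $\pi(y)$ of density close to one, pairing up points which are close in the induced metric on $Y$. Conversely, from a near-optimal discrete matching of $\pi(x)$ and $\pi(y)$ one extends by piecewise linear interpolation in the flow direction between consecutive return times; here a Birkhoff estimate on $r$ controls both the resulting local time dilation and the defect coming from atypical fibers.

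The main obstacle will be the converse half of the comparison lemma, because $r$ only lies in $L^1_+$ rather than being bounded, so a priori the construction must handle fibers on which $r$ is arbitrarily large. I would handle this by truncating $r$ at level $M$, using $\nu(\{r>M\}) \to 0$ together with $\int r \cdot \chi_{\{r>M\}}\,d\nu \to 0$ and ergodicity to ensure that the contribution of long fibers to both the continuous and the discrete matching is negligible on a full-measure set as $M \to \infty$. Beyond this bookkeeping, the remaining ingredients, namely Ambrose-Kakutani and the identification of loose Kroneckerness of the flow with loose Bernoullicity of the base, are standard inputs that need only be invoked.
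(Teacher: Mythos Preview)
Your reduction via Ambrose--Kakutani has a genuine gap. The representation theorem produces a purely measurable isomorphism between $(X,\flowF,\mu)$ and a special flow over an abstract Lebesgue space $(Y,T,\nu)$; there is no canonical compact metric on $Y$ making $T$ continuous. But the object $\fk(\pi(x),\pi(y))$ in your comparison lemma, and the discrete-time theorem of \cite{KG-R} you want to invoke, both require exactly that: a compact metric space and a continuous map. Saying ``the induced metric on $Y$'' does not resolve this, since even if you realize $Y$ as a measurable cross-section inside $X$ with the restricted metric, the first-return map is in general neither continuous nor defined on a compact set, so \cite{KG-R} does not apply. A secondary issue is that your forward construction (``restrict $h$ to times at which both $\smap x$ and $\hsmap y$ cross the base'') is vacuous as stated: the set of simultaneous crossings has measure zero, so you would need a genuine nearest-crossing assignment together with an argument that it is order-preserving and loses little density.

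The paper avoids these difficulties by never leaving the topological space $(X,d)$. For the implication ``loosely Kronecker $\Rightarrow$ $\fkflow$ vanishes on a full-measure set'' it passes to an ergodic time-$t$ map $\tmap$, which is a continuous self-map of $(X,d)$; the Ornstein--Rudolph--Weiss theorem (Theorem~\ref{timet_LK}) says $(X,\tmap,\mu)$ is loosely Kronecker, so \cite{KG-R} gives a full-measure set on which $\fk=0$, and Proposition~\ref{timet_FK} upgrades this to $\fkflow=0$. For the converse the paper does \emph{not} reduce to any discrete system at all: it works directly with Ratner's criterion (Theorem~\ref{ratnerLK}), approximating an arbitrary partition by an $\vep$-essentially open one (Proposition~\ref{essent_open_part}) so that a $\wt{(t,\de,\de)}$-matching becomes a $(t,3\vep,\SQ)$-matching, and then using Lemma~\ref{key_lemma} to transfer the estimate back to the original partition. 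This shows $K_t(15\vep,\SP)=1$ for all large $t$, hence $e(\flowF,u)=0$. If you want to salvage your strategy, the natural fix is to replace the Ambrose--Kakutani base by a time-$t$ map for the first implication; for the second implication, note that the paper explicitly remarks that $\fkflow(x,y)=0$ does \emph{not} force $\fk(x,y)=0$ for time-$t$ maps, so some direct flow argument like Ratner's criterion seems unavoidable.
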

	
	The main difference between our proof and the one for discrete-time systems by Garc\'ia-Ramos and Kwietniak is that their proof uses Katok's criterion presented in \cite{Katok_discrete_spec} while ours is based on Ratner's criterion from \cite{Ratner_2017}.
	
	Motivated by the discrete-time version of Theorem \ref{thmA}, Garc\'ia-Ramos and Kwietniak introduced the notion of \emph{topologically loosely Kronecker systems} where instead of points in a set of full measure, every pair of points is indistinguishable with respect to the FK-pseudometric, meaning $\fk(x,y)=0$ for every $x,y \in X$. We will use the same terminology for flows replacing $\fk$ with its variant for flows $\fkflow$.
	
	For both, discrete and continuous-time case, the connection between measure-preserving and continuous (topological) systems is well established. On one hand, the Krylov-Bogoliubov Theorem guarantees the existence of an invariant measure for continuous flows and maps. On the other hand, the Jewett-Krieger Theorem says that every ergodic transformation is isomorphic to a uniquely ergodic continuous map. 
	We call a continuous flow a \emph{topological model} of a measure-preserving flow if it is uniquely ergodic and isomorphic to the measure-preserving one with respect to its unique ergodic invariant measure.
	For flows, Denker and Eberlein \cite{Denker-Eberlein} obtained a result similar to the Jewett-Krieger Theorem proving that every ergodic measure-preserving flow admits a minimal topological model. 
	
	As Theorem \ref{thmA}, the following result is another continuous-time version of a result obtained by Garc\'ia-Ramos and Kwietniak in \cite{KG-R}. 
	
	\begin{restatable}{thm}{secthm}\label{thmB}
		Let $\flowF$ be a continuous flow and $\Erg(X)$ be the set of all ergodic $\flowF$-invariant measures. Then $\flowF$ is topologically loosely Kronecker if and only if $\flowF$ is uniquely ergodic and $(X,\flowF, \mu)$ is loosely Kronecker, where $\{\mu\}~=~ \Erg(X)$.
	\end{restatable}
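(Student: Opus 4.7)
The plan is to use Theorem \ref{thmA} as a black box, bridging the measure-theoretic and topological notions via the fact that unique ergodicity forces every point to be generic for the unique invariant measure, and combining this with the property that $\fkflow$ cannot distinguish points that share the same generic measure.

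\textbf{Direction ($\Leftarrow$).} Assume $\flowF$ is uniquely ergodic with unique invariant measure $\mu$ and that $(X,\flowF,\mu)$ is loosely Kronecker. By Theorem \ref{thmA}, there is a Borel set $H \subseteq X$ with $\mu(H) = 1$ on which $\fkflow \equiv 0$. The task is to extend this to all of $X$. The key input is the classical strengthening of the Birkhoff theorem for uniquely ergodic flows: every $x \in X$ is generic for $\mu$, and the convergence of Birkhoff averages of continuous functions is uniform in $x$. From the definition of $\fkflow$ as an infimum over approximate time-matchings between long orbit segments, one then checks that any two points generic for the same ergodic measure have FK-distance zero; in particular $\fkflow(x,z) = 0$ for every $x \in X$ and every $z \in H$. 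The triangle inequality for $\fkflow$ gives $\fkflow(x,y) = 0$ for all $x,y \in X$.

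\textbf{Direction ($\Rightarrow$).} Assume $\fkflow(x,y) = 0$ for all $x,y \in X$. We first prove unique ergodicity. Suppose instead there exist two distinct ergodic invariant measures $\mu_1 \neq \mu_2$, so there is $f \in C(X)$ with $\int f \, d\mu_1 \neq \int f \, d\mu_2$. The Birkhoff ergodic theorem for flows yields points $x_1, x_2$ that are generic for $\mu_1$ and $\mu_2$ respectively. The main step is to show that $\fkflow(x_1,x_2) = 0$ combined with uniform continuity of $f$ forces the Birkhoff averages of $f$ along the orbits of $x_1$ and $x_2$ to have the same limit; this proceeds by comparing $\frac{1}{T}\int_0^T f(\vphi^t x_i)\,dt$ using an almost-isometric reparametrization of time witnessing $\fkflow(x_1,x_2) < \varepsilon$, and exhausting the orbit segments except for a small time-measure set. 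This contradiction establishes unique ergodicity. Writing $\{\mu\} = \Erg(X)$, the hypothesis gives $\fkflow \equiv 0$ on the full-measure set $X$, so Theorem \ref{thmA} yields that $(X,\flowF,\mu)$ is loosely Kronecker.

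\textbf{Main obstacle.} The hard part, in both directions, is the bridging lemma: if $\fkflow(x,y) = 0$ then $x$ and $y$ must produce the same empirical measures in the Cesàro sense on continuous observables, and conversely if $x,y$ are both generic for the same ergodic measure then $\fkflow(x,y) = 0$. The first implication needs that the approximate time reparametrizations which witness $\fkflow(x,y)$ being small do not distort integrals of uniformly continuous functions; the second uses that long Birkhoff sums of any continuous function on $\mu$-generic orbits agree up to an arbitrarily small error and translates this into the existence of a good time-reparametrization matching the two orbit segments. Everything else in the proof is formal manipulation with the triangle inequality and Theorem \ref{thmA}.
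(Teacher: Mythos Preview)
Your ($\Rightarrow$) direction is correct and essentially matches the paper: the empirical-measure comparison you sketch is exactly Proposition~\ref{FK_measure}/Proposition~\ref{generic_points}, from which unique ergodicity is immediate (the paper records this as Lemma~\ref{TLK_unique}), and then Theorem~\ref{thmA} finishes.

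The ($\Leftarrow$) direction, however, has a genuine gap. Your bridging claim---``if $x,y$ are both generic for the same ergodic measure then $\fkflow(x,y)=0$''---is false. If it held, then for \emph{any} ergodic flow the set of $\mu$-generic points (which has full measure) would satisfy $\fkflow\equiv 0$, and Theorem~\ref{thmA} would force every ergodic flow to be loosely Kronecker. But non-loosely-Kronecker ergodic flows exist (Feldman, Katok, Ornstein--Rudolph--Weiss). Agreement of Birkhoff averages is a far weaker condition than the existence of an almost-isometric time-matching of orbit segments: two orbits can be statistically indistinguishable while no reparametrization aligns them in the metric $d$. The proposed translation ``Birkhoff sums agree $\Rightarrow$ good time-reparametrization exists'' simply does not go through.

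The paper avoids this issue by not attempting to extend the full-measure set from Theorem~\ref{thmA} to all of $X$ directly. Instead it passes to a uniquely ergodic time-$t$ map (Proposition~\ref{Unique_ergodic_tmap}), notes that this map is loosely Kronecker (Theorem~\ref{timet_LK}), invokes the already-proved discrete-time version of Theorem~\ref{thmB} from \cite{KG-R} to obtain $\fk\equiv 0$ on all of $X$ for $\vphi^t$, and then lifts back to the flow via Proposition~\ref{timet_FK}. The extension from a full-measure set to all of $X$ is thus imported from the discrete case rather than argued from genericity.
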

	
	\begin{corollary*}
		A continuous flow is a topological model of a loosely Kronecker measure-preserving flow if and only if it is topologically loosely Kronecker.
	\end{corollary*}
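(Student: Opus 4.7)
The plan is to deduce the corollary as an almost immediate consequence of Theorem~\ref{thmB}, combined with the fact that the loose Kronecker property is invariant under measure-theoretic isomorphism of ergodic measure-preserving flows. Since a flow is loosely Kronecker by definition when it is Kakutani equivalent to a Kronecker flow, and Kakutani equivalence is itself expressed via an isomorphism to an $L^1$ time-change, this invariance is a standard transport-of-structure fact: given $(X,\flowF,\mu) \cong (Y,\Psi,\nu)$ and a witness isomorphism $(Y,\Psi,\nu) \cong (Z,\Theta,\lambda)^{h}$ (an $L^1$ time-change of a Kronecker flow), one composes to obtain the analogous witness for $(X,\flowF,\mu)$.

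For the reverse implication, I would assume $\flowF$ is topologically loosely Kronecker. Theorem~\ref{thmB} supplies both unique ergodicity of $\flowF$ and the loose Kroneckericity of $(X,\flowF,\mu)$, where $\{\mu\} = \Erg(X)$. Since $\flowF$ is trivially isomorphic to itself, this exhibits $\flowF$ as a topological model of the loosely Kronecker measure-preserving flow $(X,\flowF,\mu)$.

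For the forward implication, I would assume $\flowF$ is a topological model of some loosely Kronecker measure-preserving flow $(Y,\Psi,\nu)$. Unfolding the definition of topological model, $\flowF$ is uniquely ergodic with unique ergodic invariant measure $\mu$, and $(X,\flowF,\mu)$ is measure-theoretically isomorphic to $(Y,\Psi,\nu)$. Invoking the isomorphism invariance of loose Kroneckericity noted above, $(X,\flowF,\mu)$ is itself loosely Kronecker. A second application of Theorem~\ref{thmB}, now in the opposite direction, yields that $\flowF$ is topologically loosely Kronecker.

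The only delicate point is the isomorphism invariance claim for loose Kronecker flows, and even this is fairly routine once one fixes the definition of Kakutani equivalence in Section~\ref{erg_theory}. There is no substantive new obstacle in this corollary: it is a direct corollary of Theorem~\ref{thmB} coupled with the usual bookkeeping about how isomorphisms interact with time-changes and factor maps.
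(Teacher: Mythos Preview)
Your proposal is correct and matches the paper's approach: the corollary is stated immediately after Theorem~\ref{thmB} without a separate proof, as it is a direct consequence of that theorem together with the isomorphism invariance of the loosely Kronecker property. Your unpacking of both directions is exactly the intended reading.
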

	
	Continuous uniquely ergodic flows whose unique invariant measure is loosely Kronecker are present in the literature. For example, strictly ergodic distal flows are contained in this class. Even in the smooth setting this class of systems is non-empty since it contains horocycle flows on compact surfaces of constant negative curvature, however Kanigowski, Vinhage and Wei in \cite{KVW} showed that those are essentially the only loosely Kronecker unipotent flows.
	
	In general, the connection between ergodic notions and their topological counterparts is not so strong. For example, although the restriction of a continuous flow to the support of a mixing measure is topologically mixing, there are topological models of mixing measure-preserving flows that are not topologically mixing. There are also uniquely ergodic topologically mixing flows whose unique invariant measure is not mixing. In summary, different topological models of a given ergodic flow can manifest distinct behaviors from a topological perspective. Analogously to Theorem 4.5 in \cite{KG-R} for discrete-time systems, Theorem \ref{thmB} provides a purely topological characterization of continuous uniquely ergodic flows whose unique invariant measure is loosely Kronecker. Moreover, this characterization reveals that for loosely Kronecker systems there is a deep connection between the measure-theoretical notion and its topological counterpart.
	
	Here is the organization of the paper.
	In section 2 we present the basic definitions about flows and ergodic theory that appear in this work. In section 3 we recall the definition of the FK-pseudometric for the discrete case and its properties, then we introduce our definition for flows stipulating parallels between the two. In section 4 we recall Ratner's characterization of loosely Kronecker flows and prove Theorem \ref{thmA}. In section 5 we present the notion of topologically loosely Kronecker flows and prove the Theorem \ref{thmB}. In section 6 we provide examples of topologically loosely Kronecker flows and we present some consequences of Theorem \ref{thmA} and Theorem \ref{thmB}.
	
	\section{Basic definitions}
	Throughout this work by $X$ we mean a compact metrizable space and $d$ is a compatible metric. Whenever we mention measure-theoretical properties of $X$ we will consider $X$ to be a measurable space endowed with its Borel $\si$-algebra.
	The Lebesgue measure on the space of real numbers $\R$ will be denoted by $\la$. The set of non-negative real numbers will be denoted by $\R^+$.
	
	\subsection{Flows}
	A flow $\flowF$ on $X$ is a map $\flowF \colon \R \times X \to X$ such that $\flowF(0,x)=x$ and $\flowF(t+s,x) = \flowF(s, \flowF(t,x))$ for all $x \in X$ and $t,s \in \R$. If the map $\flowF$ is continuous (resp. measurable) with respect to the product topology (resp. product $\si$-algebra) on $\R \times X$ we say it is a \textbf{continuous flow} (resp. \textbf{measurable flow}). For $t \in \R$ the \textbf{time-$t$ map} is defined as $\tmap  : = \flowF(t, \cdot) \colon X \to X$. Since $\vphi^0$ is simply the identity, we will tacitly assume $t \neq 0$ when referring to time-$t$ maps.  
	
	The \textbf{forward orbit} of $x$ under $\flowF$ is  $\SO^{+}(x) :=\left\{\varphi^t(x) \in X : t \in \R^+ \right\}$  and the \textbf{backward orbit} of $x$ under $\flowF$ is $\SO^{-}(x):=\left\{\varphi^{-t}(x) \in X : t \in \R^+\right\}$. The \textbf{orbit} of $x$ under $\flowF$ is $\SO(x) := \SO^+(x) \cup \SO^-(x)$. A continuous flow is \textbf{transitive} (resp. \textbf{minimal}) if for some (resp. every) $x \in X$, the set $\SO^+(x)$ is dense in $X$.
	A \textbf{singularity} of a flow is a point $x \in X$ such that $\SO(x)=\{x\}$. In this work we will only consider flows without singularities.
	
	Let $T\colon X \to X$ be a homeomorphism, $\al \colon X \to \R^+\backslash\{0\}$ a continuous function and the map $\tau: X \times \R \to X \times \R$ given by $\tau(x,s) = (T(x), s - \al(x))$. We denote by $X^\al_T =X \times \R\big/ \sim$ the quotient space where $(x, s) \sim \tau^n(x,s)$ for every $n \in \Z$.
	
	The \textbf{special flow} over the transformation $T$ with roof function $\al$ is the flow on $X^\al_T$ induced by the time translation $f^t(x,s) = (x, s+t)$. In case $\al \equiv 1$ we call it the \textbf{suspension flow over $T$} and we denote $X^1_T = X_T$. It is possible to define a metric $d_T$ on $X_T$ induced by $d$ assuming without loss of generality that $(X,d)$ has diameter at most 1 (see \cite{Thomas_suspension_flow} for details). For $t \in [0,1)$, the restriction of $d_T$ to $X \times \{t\}$ is given for $x,y \in X$ by
	$$d_T((x,t), (y,t)) := (1-t)d(x,y) + td(T(x), T(y)).$$
	
	\subsection{Ergodic Theory}\label{erg_theory}
	By $\SM(X)$ we denote the set of all Borel probability measures on $X$. The \textbf{support} of a measure $\mu  \in \SM(X)$ is $\supp(\mu) := \{ x \in X : \mu(U)>0 \text{ for all $U \subseteq X$ open neighborhood of } x \}$. Let $\flowF$ be a measurable flow on $X$.  We say that a measure $\mu \in \SM(X)$ is \textbf{$\flowF$-invariant} if it is $\tmap$-invariant for every $t \in \R$. The set of all $\flowF$-invariant measures will be denoted by $\Mes(X)$.  
	
	We say that an invariant measure $\mu \in \Mes(X)$ is \textbf{ergodic} if for any measurable subset $A \subseteq X$ with $\tmap(A)=A$ for all $t \in \R$ either $\mu(A) = 0$ or $\mu(A)=1$. The set of ergodic measures will be denoted by $\Erg(X)$.
	
	It is well known that $\SM(X)$ endowed with the so-called weak* topology is a compact metrizable space and $\Mes(X) \subseteq \SM(X)$ a compact subspace. It is also well known that such topology is induced by the Prokhorov metric given by
	$$D_P(\mu,\nu) = \inf \{ \vep >0 : \mu (B) \leq \nu(B^\vep) + \vep \text{ for every Borel set } B \subseteq X\},$$
	where $B^\vep$ denotes the $\vep$-hull of $B$, that is, $B^\vep =\{y \in X: d(y,B)< \vep\}$.
	
	Let $\de_x$ denote the Dirac measure supported on $\{x\}$ for $x \in X$. For $t \in \R^+$ the \textbf{$t$-empirical measure of $x \in X$} (with respect to $\flowF$) will be denoted by
	$$\mu_{x,t} : = \frac{1}{t} \int_{0}^{t} \de_{\vphi^s(x)} ds.$$
	
	We say that a point $x \in X$ is \textbf{generic} for a measure $\mu \in \Mes(X)$ if $\mu_{x,t}$ converges to $\mu$ in the weak* topology as $t \to \infty$. The set of generic points of a measure $\mu$ will be denoted by $\gen(\mu)$. As a consequence of the Birkhoff Ergodic Theorem, we have $\mu(\gen(\mu))=1$ for every $\mu \in \Erg(X)$.
	
	We call a triple $(X, \flowF, \mu)$ a \textbf{measure-preserving flow} if $\flowF$ is a measurable flow on $X$ (not necessarily continuous) and $\mu \in \Mes(X)$. In the case that $\mu \in \Erg(X)$ we will simply say that $(X, \flowF, \mu)$ is an  \textbf{ergodic flow}. A measure-preserving flow $(X, \flowF, \mu)$ is called \textbf{uniquely ergodic} if $\{\mu\} = \Erg(X)$.
	
	Let $(X,\mu)$ be a Lebesgue space, $T \colon X \to X$ a measure-preserving automorphism and $\al \in L^1_+(X,\mu)$. Analogously as done before one can define $X^\al_T$ and a measurable counterpart of the special flow.  Note that that the normalization of the measure induced by $\mu \times \la$ is invariant under the special flow.
	
	\begin{defi}
		We say that two measure-preserving flows $(X, \flowF, \mu)$ and $(Y, \Psi, \nu)$ are \textbf{isomorphic} if there exists a measure-preserving isomorphism $h \colon X \to Y$ such that $(h \circ \tmap)(x) = (\psi^t \circ h)(x)$ for every  $t \in \R$ and $\mu$-almost every $x \in X$. 
		
	\end{defi}
	
	It is worth mentioning the Ambrose-Kakutani Representation Theorem which says that every measure-preserving flow is isomorphic to a special flow.
	
	\begin{defi}
		A continuous flow $\flowF$ on $X$ is said to be a \textbf{topological model} of the measure-preserving flow $(Y, \Psi, \nu)$ if it is uniquely ergodic and $(X, \flowF, \mu)$ is isomorphic to $(Y,\Psi, \nu)$, where $\{\mu\}=\Erg(X)$.
	\end{defi}
	
	\begin{defi}
		A measure-preserving flow $(X, \flowF, \mu)$ is called \textbf{Kronecker} if it is isomorphic to a special flow over an irrational rotation on the circle. 
	\end{defi}
	
	We say that a flow $\Psi$ on $X$ is a \textbf{time-change} of $\flowF$ if there exists $\al \in L^1_+(X,\mu)$ such that
	$$\psi^t (x) = \vphi^{v(t,x)}(x),$$
	where $v(t,x)$ is the solution to 
	$$\int_{0}^{v(t,x)} \al(\smap(x))ds = t.$$
	It follows that $\Psi$ preserves the probability measure $d\nu = \left(\frac{\al}{ \int_{X} \al d\mu}\right) d\mu$.
	If, in addition, the function $\al$ is continuous, we will say it is a \textbf{continuous time-change}.
	
	\begin{defi}
		We say that two measure-preserving flows are \textbf{Kakutani equivalent} if one of them is isomorphic to a time-change of the other. 		
	\end{defi}
	
	\begin{defi}
		A measure-preserving flow $(X, \flowF, \mu)$ is called \textbf{loosely Kronecker} if it is Kakutani equivalent to a Kronecker flow.
	\end{defi}	
	
	\section{Feldman-Katok pseudometric} 
	
	\subsection{Discrete Feldman-Katok pseudometric}
	
	Kwietniak and {\L}{\k a}cka \cite{Kwietniak-Lacka} defined the Feldman-Katok pseudometric (FK-pseudometric for short) in order to prove that measures obtained by the GIKN construction (see \cite{KING}) have zero entropy. The result follows from the fact that such measures are FK-limits of loosely Kronecker measures and therefore they are also loosely Kronecker.
	
	Later, the FK-pseudometric turned out to be even more connected to loosely Kronecker maps, providing a characterization of loosely Kronecker ergodic measures for continuous maps, see \cite{KG-R}.  
	
	Let us recall the pseudometric with respect to a continuous map $T \colon X \to X$. Fix $x,y \in X$, $\de>0$ and $n \in \N$. We define an \textbf{$(n,\de)$-matching} between $x$ and $y$ to be an order preserving bijection $\pi \colon D \to D'$ such that $D,D' \subseteq \{0, \ldots, n-1\}$ and for every $i \in D$ we have $d(T^i(x), T^{\pi(i)}(y))< \de$. Given an $(n,\de)$-matching $\pi \colon D \to D'$, let $|\pi|$ be the cardinality of $D$.
	
	\begin{defi}
		The \textbf{$(n,\de)$-gap} between $x$ and $y$ is given by
		$$
		\bar{f}_{n,\de}(x,y) = 1 - \frac{\max \{|\pi| : \pi \text{ is a $(n,\de)$-matching of $x$ and $y$}\}}{n}.$$
	\end{defi}
	
	\begin{defi}
		The discrete-time \textbf{$f_\de$-pseudometric} is given by
		$$\bar{f}_\de(x,y) = \limsup_{n \to \infty} \bar{f}_{n,\de}(x,y).$$
	\end{defi}
	
	Perhaps the most relevant property of the $\bar{f}_\de$-pseudometric is its invariance along orbits which will also be preserved by the FK-pseudometric.
	
	\begin{fact}
		For every $x,y \in X$ and $\de >0$ we have $ \bar{f}_{\de}(T^k(x),y)= \bar{f}_{\de}(x,y)$ for every $k \in \N$. 
	\end{fact}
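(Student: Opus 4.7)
The plan is to establish both inequalities by directly transferring $(n,\de)$-matchings between the pair $(x,y)$ and $(T^k(x),y)$, with the shift in the starting point costing only $O(k/n)$ matched pairs, which vanishes in the limit. For the inequality $\bar{f}_\de(T^k(x),y) \leq \bar{f}_\de(x,y)$, I would take a maximal $(n,\de)$-matching $\pi\colon D \to D'$ of $x$ and $y$, discard from $D$ the (at most $k$) indices smaller than $k$, and translate what remains by $-k$ to obtain an $(n,\de)$-matching $\tilde{\pi}$ of $T^k(x)$ and $y$ of size at least $|\pi| - k$ (the range is unchanged and still lies in $\{0,\ldots,n-1\}$, and order is preserved). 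This yields $\bar{f}_{n,\de}(T^k(x),y) \leq \bar{f}_{n,\de}(x,y) + k/n$, and passing to $\limsup$ as $n \to \infty$ gives the inequality.

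For the reverse direction, I would start from a maximal $(n,\de)$-matching $\pi\colon D \to D'$ of $T^k(x)$ and $y$ and translate the domain by $+k$, setting $\tilde{D} = D + k \subseteq \{k,\ldots,n+k-1\}$ and $\tilde{\pi}(j) = \pi(j-k)$ for $j \in \tilde{D}$. Since the images remain in $\{0,\ldots,n-1\} \subseteq \{0,\ldots,n+k-1\}$ and $d(T^j(x),T^{\tilde{\pi}(j)}(y)) = d(T^{j-k}(T^k(x)),T^{\pi(j-k)}(y)) < \de$, this is an $(n+k,\de)$-matching of $x$ and $y$ with $|\tilde{\pi}| = |\pi|$. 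Hence $\bar{f}_{n+k,\de}(x,y) \leq 1 - \frac{|\pi|}{n+k} = 1 - \bigl(1 - \bar{f}_{n,\de}(T^k(x),y)\bigr)\cdot \frac{n}{n+k}$. Taking $\limsup$ as $n \to \infty$ and using that $n/(n+k) \to 1$ while $1 - \bar{f}_{n,\de}(T^k(x),y) \in [0,1]$ is bounded yields $\bar{f}_\de(x,y) \leq \bar{f}_\de(T^k(x),y)$.

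The whole argument is bookkeeping based on the order-preserving, shift-tolerant nature of the definition; the only care needed is when interchanging $\limsup$ with the product of a bounded sequence and one converging to $1$, which is harmless because $\liminf_n(a_n b_n) = \liminf_n a_n$ when $a_n \in [0,1]$ and $b_n \to 1$.
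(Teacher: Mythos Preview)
Your proof is correct. The paper does not actually provide a proof of this Fact: it is stated as one of the elementary properties of the discrete-time $(n,\de)$-gap inherited from \cite{Kwietniak-Lacka}, where such facts are likewise left to the reader. Your argument --- translating matchings by $\pm k$ and absorbing the resulting $O(k/n)$ loss in the $\limsup$ --- is precisely the intended elementary verification, and your handling of the $\limsup$ via $\liminf_n(a_n b_n)=\liminf_n a_n$ for bounded $a_n$ and $b_n\to 1$ is sound.
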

	
	\begin{defi}
		The discrete-time \textbf{Feldman-Katok pseudometric} is given by
		$$\fk (x,y) = \inf \{\de>0 : \bar{f}_\de(x,y) \leq \de\}.$$
	\end{defi}
	
	\subsection{Feldman-Katok pseudometric for flows}
	Inspired by the $\bar{f}$-metric Kwietniak and {\L}{\k a}cka introduced in \cite{Kwietniak-Lacka} the FK-pseudometric for discrete-time systems. Independently, Ratner \cite{Ratner_2017} proposed a notion of matching between segments of orbits for flows that can be seen as a continuous-time version of the $\bar{f}_n$-metric with respect to partitions (see section \ref{LK_flows} for details).
	Motivated by these works we propose a definition of the Feldman-Katok pseudometric for flows.
	
	We start defining a topological matching based on the metric instead of using a partition. The definitions here will behave similarly to the ones presented previously for maps. To keep the intuition behind the definitions we will use similar notation using a tilde to emphasize the continuous-time setting.
	
	\begin{defi}
		Let $x,y \in X$, $t>1$, $0<\vep<1$ and $\de>0$. We say that $x$ and $y$ are \textbf{$\wt{(t, \vep,\de)}$-matchable} if there exist measurable sets $A, A' \subseteq [0,t]$ with $\la(A)>(1-\vep)t$, $\la(A') > (1-\vep)t$ and an increasing absolutely continuous onto map $h:A \to A'$  satisfying for all $s \in A$:
		\begin{enumerate}
			\item $|h'(s) - 1|< \vep $,
			
			\item $d(\varphi^s(x), \varphi^{h(s)} (y)) < \de$.
		\end{enumerate}
		We call $h$ a \textbf{$\wt{(t,\vep,\de)}$-matching} between $x$ and $y$.
	\end{defi}
	
	The natural way to define $\wt{(t,\de)}$-gap as the analogue of the $(n,\de)$-gap is the following.
	
	\begin{defi}
		The \textbf{$\wt{(t,\de)}$-gap} between $x$ and $y$ is given by
		$$\tilde{f}_{t,\de}(x,y) = \inf\{\vep >0 : x \text{ and } y \text{ are } \wt{(t, \vep,\de)}\text{-matchable} \}.$$
		We define $\tilde{f}_{t,\de}(x,y) = 1$ in case the set is empty.
	\end{defi}
	
	In \cite{Kwietniak-Lacka} facts about the $(n,\de)$-gaps are left to the reader. For flows these facts remain true and can be proved following the same ideas. For example, we have:
	
	\begin{fact}
		If $0<\de < \de'$, then $\tilde{f}_{t,\de'}(x,y) \leq $$\tilde{f}_{t,\de}(x,y)$.
	\end{fact}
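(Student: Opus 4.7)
The plan is to prove monotonicity by showing that any witness of $\widetilde{(t,\vep,\de)}$-matchability is automatically a witness of $\widetilde{(t,\vep,\de')}$-matchability when $\de<\de'$, so that the infimum defining $\tilde{f}_{t,\de'}$ is taken over a larger set than the one defining $\tilde{f}_{t,\de}$.

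Concretely, I would fix $x,y\in X$ and $0<\de<\de'$, and suppose first that $\tilde{f}_{t,\de}(x,y)<1$, as otherwise the inequality is trivial (the target $\tilde{f}_{t,\de'}(x,y)$ is always at most $1$). For any $\vep>0$ such that $x$ and $y$ are $\widetilde{(t,\vep,\de)}$-matchable, pick a matching $h\colon A\to A'$ with $\la(A),\la(A')>(1-\vep)t$ satisfying $|h'(s)-1|<\vep$ and $d(\vphi^s(x),\vphi^{h(s)}(y))<\de$ for every $s\in A$. Since $\de<\de'$, the same sets $A,A'$ and the same map $h$ also satisfy $d(\vphi^s(x),\vphi^{h(s)}(y))<\de'$, so $h$ is also a $\widetilde{(t,\vep,\de')}$-matching. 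Thus
\[
\{\vep>0:x,y\text{ are }\widetilde{(t,\vep,\de)}\text{-matchable}\}\subseteq\{\vep>0:x,y\text{ are }\widetilde{(t,\vep,\de')}\text{-matchable}\}.
\]
Taking infima on both sides reverses nothing and gives $\tilde{f}_{t,\de'}(x,y)\leq\tilde{f}_{t,\de}(x,y)$.

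There is no real obstacle here: the argument is a direct inclusion of witness sets, and the only thing to notice is that conditions (1) and the measure bounds on $A,A'$ do not involve $\de$ at all, so they are preserved verbatim, while condition (2) only becomes easier as $\de$ grows.
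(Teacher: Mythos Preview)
Your argument is correct and is exactly the natural one: the paper does not prove this Fact, explicitly leaving it to the reader (``facts about the $(n,\de)$-gaps are left to the reader \ldots\ for flows these facts remain true and can be proved following the same ideas''), and your inclusion-of-witness-sets argument is precisely what is intended.
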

	
	\begin{fact}
		If $s \geq 1$, then $\tilde{f}_{t+s,\de} (x,y) \leq \tilde{f}_{t,\de} (x,y) + \frac{s}{t}$.
	\end{fact}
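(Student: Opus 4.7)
The plan is to show that any matching on $[0,t]$ can be reused verbatim as a matching on the longer interval $[0, t+s]$, at the cost of a controlled loss in the proportion parameter. We may assume $\al := \tilde{f}_{t,\de}(x,y) < 1$, since otherwise $\tilde{f}_{t+s,\de}(x,y) \leq 1 \leq \al + s/t$ and the inequality is immediate. Fix any $\vep > \al$ with $\vep + s/t < 1$, and pick a $\wt{(t,\vep,\de)}$-matching $h\colon A \to A'$ with $A, A' \subseteq [0,t]$ and $\la(A), \la(A') > (1-\vep)t$. Viewing $A$ and $A'$ as subsets of $[0, t+s]$ and keeping $h$ unchanged, the conditions $|h'(u)-1| < \vep$ and $d(\vphi^u(x), \vphi^{h(u)}(y)) < \de$ still hold for every $u \in A$, so it only remains to verify the new lower bound on $\la(A)$ and $\la(A')$.

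Set $\vep' := \vep + s/t$. A direct computation gives
$$(1-\vep)t - (1-\vep')(t+s) \;=\; \vep' s \;=\; \vep s + \frac{s^2}{t} \;>\; 0,$$
hence $\la(A) > (1-\vep)t > (1-\vep')(t+s)$, and symmetrically for $\la(A')$. Therefore $h$ is a $\wt{(t+s,\vep',\de)}$-matching, so by definition of the gap $\tilde{f}_{t+s,\de}(x,y) \leq \vep' = \vep + s/t$. Letting $\vep \downarrow \al$ yields the claim.

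There is no substantial obstacle here: the argument is pure bookkeeping of parameters, mirroring the discrete-time fact where an $(n,\de)$-matching is reinterpreted as an $(n+k,\de)$-matching by declining to match the extra $k$ indices, losing at most $k/n$ in the gap. The hypothesis $s \geq 1$ plays no quantitative role beyond guaranteeing $t+s > 1$, so that $\tilde{f}_{t+s,\de}$ is a defined quantity under the conventions of this subsection.
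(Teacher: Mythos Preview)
Your argument is correct and is exactly the intended one: the paper does not spell out a proof for this fact but indicates it follows the same ideas as the discrete-time analogue in Kwietniak--{\L}{\k a}cka, namely reusing a matching on $[0,t]$ verbatim as a matching on $[0,t+s]$ and absorbing the unmatched tail of length $s$ into the proportion parameter. The only cosmetic omission is the subcase $\al<1$ with $\al + s/t \geq 1$, where your choice of $\vep$ with $\vep + s/t < 1$ is impossible; but this case is disposed of by the same triviality you invoke for $\al=1$.
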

	
	The next property is analogous to Fact 14 in \cite{Kwietniak-Lacka} which elucidates the first connection between the pseudometric and ergodic theory.
	Despite the simplicity of the proof, we decided to present it to give a flavor of why we have less control of the matching in our setting. The difference here is that we obtain the result for $2\vep$ instead of $\vep$ as in the discrete-time case.
	
	\begin{Prop}\label{FK_measure}
		If $\tilde{f}_{t,\de} (x,y) < \vep$, then $D_P(\mu_{x,t}, \mu_{y,t})< \max \{\de, 2\vep\}$.
	\end{Prop}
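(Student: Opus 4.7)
The plan is a direct computation comparing empirical measures through the matching. First I would unpack the hypothesis: since $\tilde{f}_{t,\delta}(x,y) < \varepsilon$, by the definition of infimum there exists $\varepsilon' < \varepsilon$ such that $x$ and $y$ are $\widetilde{(t,\varepsilon',\delta)}$-matchable, so I fix measurable sets $A, A' \subseteq [0,t]$ of Lebesgue measure greater than $(1-\varepsilon')t$ and an absolutely continuous increasing bijection $h\colon A \to A'$ with $|h'(s)-1|<\varepsilon'$ and $d(\varphi^s(x), \varphi^{h(s)}(y)) < \delta$ on $A$. To control $D_P$, it suffices (since both empirical measures are probability measures, so the one-sided Prokhorov inequality automatically symmetrizes) to show $\mu_{x,t}(B) \leq \mu_{y,t}(B^\eta) + \eta$ for every Borel $B \subseteq X$, where $\eta := \max\{\delta, 2\varepsilon\}$.

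Next I would fix a Borel set $B$ and write $S := \{s \in [0,t] : \varphi^s(x) \in B\}$, so that $\mu_{x,t}(B) = \lambda(S)/t$. The key split is $S = (S \cap A) \sqcup (S \setminus A)$: the bad part satisfies $\lambda(S\setminus A) \leq \lambda([0,t]\setminus A) < \varepsilon' t$, while the good part is well-controlled by $h$. Indeed, for $s \in S \cap A$, the matching condition gives $\varphi^{h(s)}(y) \in B^\delta$, so $h(S \cap A) \subseteq T := \{r \in [0,t] : \varphi^r(y) \in B^\delta\}$, and hence $\lambda(h(S\cap A)) \leq \lambda(T) = t\,\mu_{y,t}(B^\delta)$.

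Now I would invoke the change-of-variables formula for increasing absolutely continuous $h$: $\lambda(h(S \cap A)) = \int_{S\cap A} h'(s)\, ds \geq (1-\varepsilon')\lambda(S \cap A)$, which rearranges to $\lambda(S \cap A) \leq \lambda(h(S\cap A)) + \varepsilon'\lambda(S\cap A) \leq t\,\mu_{y,t}(B^\delta) + \varepsilon' t$. Combining with the trivial estimate on $S \setminus A$,
\[
\mu_{x,t}(B) = \frac{\lambda(S)}{t} \leq \mu_{y,t}(B^\delta) + 2\varepsilon'.
\]
Finally, a short case analysis finishes the proof. If $2\varepsilon \leq \delta$, then $2\varepsilon' < 2\varepsilon \leq \delta = \eta$ and $B^\delta = B^\eta$, so the right-hand side is $< \mu_{y,t}(B^\eta) + \eta$. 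If $2\varepsilon > \delta$, then $\eta = 2\varepsilon$ and the monotonicity $B^\delta \subseteq B^\eta$ gives $\mu_{x,t}(B) \leq \mu_{y,t}(B^\eta) + 2\varepsilon' < \mu_{y,t}(B^\eta) + \eta$.

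The only subtle point is the factor of $2$ (as the proposition itself flags): the discrete-time argument has $\lambda(A) = (1-\varepsilon)n$ but no Jacobian distortion, whereas here the matching costs us both the uncovered set of measure $\varepsilon' t$ and the additional $\varepsilon'$-distortion of $h$ inside $S \cap A$, each producing one $\varepsilon'$; their sum forces $2\varepsilon$ instead of $\varepsilon$. No other step requires care beyond verifying that the change-of-variables identity $\lambda(h(E)) = \int_E h'$ applies for a measurable $E \subseteq A$, which follows from absolute continuity of $h$ and its Luzin N-property.
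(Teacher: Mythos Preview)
Your proof is correct and follows essentially the same approach as the paper's: split the return-time set into the part inside the matching domain and its complement, use the $\delta$-closeness to land in $B^\delta$, and use the derivative bound $|h'-1|<\varepsilon$ via change of variables to pick up the second $\varepsilon$. The only cosmetic differences are that you carefully pass through an auxiliary $\varepsilon'<\varepsilon$ to secure strict inequality and spell out the final case analysis, whereas the paper handles both points implicitly.
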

	
	\begin{proof}
		Let $E \subseteq X$ be a Borel set. We will show that $\mu_{x,t}(E) \leq \mu_{y,t}(E^\be) + \be$ where $\be =\max \{\de,2\vep\}$. For $z \in X$ and $B \subseteq X$ we denote $B(z,t) = \{s \in [0,t] : \smap(z) \in B\}$.
		
		Let $h: A \to A'$ be a $\wt{(t, \vep,\de)}$-matching between $x$ and $y$.
		Since $\la(A)>(1-\vep)t$ we have
		$$ \la(E(x,t)) \leq \la (E(x,t) \cap A) + \vep t.$$
		
		Since $|h'(s) - 1|< \vep $ for $s \in A$, we also have $$ \la(E(x,t) \cap A ) \leq \la(h(E(x,t) \cap A) )  + \vep t.$$
		
		If $s \in A \cap E(x,t)$, then $\hsmap(y) \in E^\de$, which means $h(s) \in E^\de(y,t)$. Thus $h(E(x,t) \cap A) \subseteq E^\de(y,t)$ and consequently, putting it all together we obtain
		$$ \la(E(x,t)) \leq \la(h(E(x,t) \cap A) ) + 2\vep t \leq \la(E^\de(y,t)) + 2\vep t \leq \la(E^\be(y,t)) + \be t.$$
		
		Therefore, we conclude that
		$$ \mu_{x,t}(E) = \frac{\la(E(x,t)) }{t} \leq \frac{\la(E^\be(y,t)) + \be t}{t} = \mu_{y,t}(E^\be) + \be. \qedhere$$
	\end{proof}
	
	\begin{defi}
		The \textbf{$\tilde{f}_\de$-pseudodistance} is defined as
		$$\tilde{f}_\de(x,y) =\limsup_{t \to \infty} \tilde{f}_{t,\de}(x,y).$$
	\end{defi}
	
	\begin{fact}
		For $\tilde{f}_\de$ the invariance along orbits obtained in the discrete-time case is preserved. So if $z \in \SO(x)$, then $\tilde{f}_\de(z,y) = \tilde{f}_\de(x,y)$.
	\end{fact}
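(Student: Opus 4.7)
The plan is to establish $\tilde f_\de(z,y) \leq \tilde f_\de(x,y)$; the reverse inequality then follows by symmetry, since $z \in \SO(x)$ is equivalent to $x \in \SO(z)$. Write $z = \vphi^{t_0}(x)$ for some $t_0 \in \R$ and, after possibly interchanging the roles of $x$ and $z$, assume $t_0 \geq 0$. The guiding principle is that a $\wt{(T,\vep,\de)}$-matching $h \colon A \to A'$ between $x$ and $y$ with $T = t + t_0$ can be \emph{truncated and shifted} to produce a $\wt{(t,\vep',\de)}$-matching between $z$ and $y$ with $\vep' = \vep + O(t_0/t)$. This parallels the translation invariance of the discrete-time $\bar{f}_\de$-pseudometric noted just above, but with the extra care needed for real-valued shifts and boundary effects.

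For the construction, fix $\vep > \tilde f_{t+t_0,\de}(x,y)$ and a $\wt{(t+t_0,\vep,\de)}$-matching $h \colon A \to A'$. I would set
\[
\tilde A := \{\, s \in [0,t] : s + t_0 \in A \text{ and } h(s+t_0) \leq t\,\}, \qquad \tilde h(s) := h(s+t_0),
\]
with $\tilde A' := \tilde h(\tilde A) \subseteq [0,t]$. Because $\vphi^s(z) = \vphi^{s+t_0}(x)$, the distance and derivative conditions are inherited from $h$, and the increasing, absolutely continuous, onto nature of $\tilde h$ is immediate. The measure estimate splits in two: the restriction $s + t_0 \in A$ costs at most $t_0 + \vep(t+t_0)$ in $\la(\tilde A)$, while the restriction $h(s+t_0) \leq t$ discards a set whose $h$-image is contained in $(t,t+t_0] \cap A'$ of measure at most $t_0$, so by $h' \geq 1 - \vep$ its preimage has measure at most $t_0/(1-\vep)$. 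A symmetric analysis controls $\la([0,t] \setminus \tilde A')$. Combining the estimates gives
\[
\tilde f_{t,\de}(z,y) \leq \tilde f_{t+t_0,\de}(x,y) + C(\vep)\,t_0/t.
\]

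Taking $\limsup_{t \to \infty}$ on both sides kills the error term, and a shift of the index does not change the value of a limsup, so one obtains $\tilde f_\de(z,y) \leq \tilde f_\de(x,y)$ as desired. The hard part will be the measure bookkeeping for $\tilde A$: one must argue quantitatively that the two simultaneous restrictions—to $s + t_0 \in A$ and to $h(s + t_0) \leq t$—lose only $O(\vep\, t + t_0)$ in measure, which depends crucially on converting the derivative bound $h' \geq 1 - \vep$ into the preimage-measure bound described above. The remaining verifications of the distance, derivative, and monotonicity conditions of the matching transfer routinely from those of $h$.
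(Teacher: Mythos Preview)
The paper states this result as a \emph{Fact} without proof, in keeping with its remark that the elementary facts about $(n,\de)$-gaps from \cite{Kwietniak-Lacka} carry over to flows ``following the same ideas''. So there is no proof in the paper to compare against; your truncation-and-shift construction is exactly the kind of routine verification the paper is leaving to the reader, and it is correct.

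One minor presentational wrinkle: the displayed inequality
\[
\tilde f_{t,\de}(z,y) \leq \tilde f_{t+t_0,\de}(x,y) + C(\vep)\,t_0/t
\]
is slightly awkward as written, since $\vep$ is an auxiliary parameter you chose \emph{strictly above} $\tilde f_{t+t_0,\de}(x,y)$, not a function of that quantity; the constant $C(\vep)$ (which involves $1/(1-\vep)$) then sits oddly on the right-hand side. The clean statement is: for every $\vep > \tilde f_{t+t_0,\de}(x,y)$ one has $\tilde f_{t,\de}(z,y) \leq \vep + C(\vep)\,t_0/t$. Then, fixing any $\vep > \tilde f_\de(x,y)$, this holds for all large $t$, the $\limsup$ kills the error term, and letting $\vep \downarrow \tilde f_\de(x,y)$ finishes. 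This is what your final paragraph is doing anyway, so the argument is fine---just tighten the phrasing of the displayed bound.
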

	
	\begin{defi}
		The \textbf{Feldman-Katok pseudometric} (FK-pseudometric for short) is given by
		$$\fkflow(x,y) = \inf \{\de >0 : \tilde{f}_\de(x,y)<\de\}.$$
	\end{defi}
	
	As a direct consequence of Proposition \ref{FK_measure} we obtain the following measure-theoretical information from the FK-pseudometric.
	
	\begin{Prop}\label{generic_points}
		If $x \in \gen(\mu)$ and $\fkflow(x,y)=0$, then $y \in \gen(\mu)$.
	\end{Prop}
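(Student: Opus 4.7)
The plan is to deduce the conclusion directly from Proposition \ref{FK_measure} together with the triangle inequality for the Prokhorov metric. Since $\fkflow(x,y)=0$ we have arbitrarily small $\de$ with $\tilde f_\de(x,y)<\de$, and each such $\de$ gives, via Proposition \ref{FK_measure}, an explicit Prokhorov-closeness between the empirical measures $\mu_{x,t}$ and $\mu_{y,t}$ for all sufficiently large $t$.

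More concretely, I would fix $\eta>0$ and choose $\de\in(0,\eta/2)$ with $\tilde f_\de(x,y)<\de$; this is possible by definition of $\fkflow$. Unfolding the definition of $\tilde f_\de$ as a $\limsup$, there exists $T=T(\eta)>0$ such that $\tilde f_{t,\de}(x,y)<\de$ for every $t\geq T$. Applying Proposition \ref{FK_measure} with $\vep=\de$ then yields
\[
D_P(\mu_{x,t},\mu_{y,t})<\max\{\de,2\de\}=2\de<\eta \qquad \text{for all } t\geq T.
\]

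Next, since $x\in\gen(\mu)$ we have $\mu_{x,t}\to\mu$ in the weak$^*$ topology, that is $D_P(\mu_{x,t},\mu)\to 0$ as $t\to\infty$. Enlarging $T$ if necessary, I may assume $D_P(\mu_{x,t},\mu)<\eta$ for every $t\geq T$. The triangle inequality for $D_P$ then gives
\[
D_P(\mu_{y,t},\mu)\leq D_P(\mu_{y,t},\mu_{x,t})+D_P(\mu_{x,t},\mu)<2\eta \qquad \text{for all } t\geq T.
\]
Since $\eta>0$ was arbitrary, $\mu_{y,t}\to\mu$ weakly$^*$, which by definition means $y\in\gen(\mu)$.

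There is no serious obstacle here: the proof is essentially a bookkeeping exercise combining the already-proven Proposition \ref{FK_measure} with the definitions of $\fkflow$ and genericity. The only mild subtlety is the factor $2$ in Proposition \ref{FK_measure} (in contrast to the discrete-time statement), which is harmless since one simply chooses $\de$ a factor of $2$ smaller than the target $\eta$.
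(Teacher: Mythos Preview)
Your proof is correct and follows exactly the approach indicated in the paper, which simply states that the proposition is a direct consequence of Proposition~\ref{FK_measure}. The bookkeeping you supply (choosing $\de<\eta/2$, using the $\limsup$ definition to get control for all large $t$, and applying the triangle inequality for $D_P$) fills in the details that the paper omits.
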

	
	The analogy between the FK-pseudometric $\fk$ for discrete-time systems and the FK-pseudometric $\fkflow$ for flows raises the natural question if there is a relation between $\fkflow(x,y)$ for a flow and $\fk(x,y)$ for its time-$t$ maps.
	
	In general, we can only guarantee that topological and ergodic properties of a flow are projected onto many but not all time-$t$ maps. This happens with properties like minimality, ergodicity and unique ergodicity. 
	
	\begin{Prop}[Proposition 4 in \cite{Fayad}]\label{timet_transitive_minimal}
		Let $\flowF$ be a continuous flow. If $\flowF$ is transitive (resp. minimal),  then $\tmap$ is transitive (resp. minimal) for a dense $G_\de$ set of $t \in \R$. 
	\end{Prop}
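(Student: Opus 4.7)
The plan is to apply the Baire category theorem on $\R$. Fix a countable basis $\{U_j\}_{j\in\N}$ of open subsets of $X$. Recall that a homeomorphism $T\colon X \to X$ on a compact metric space is minimal if and only if for every nonempty open $U \subseteq X$ there exists $N \in \N$ with $X = \bigcup_{|n|\le N} T^{-n}(U)$; this is a standard consequence of compactness applied to the open cover $\{T^{-n}(U)\}_{n \in \Z}$. Consequently, $\tmap$ is minimal precisely when for every $j$ there exists such an $N$ for $U_j$. Transitivity admits a parallel characterization: $\tmap$ is transitive if and only if for every pair $(U_j, U_k)$ of basic open sets there exists $n \in \Z$ with $(\tmap)^{-n}(U_j) \cap U_k \neq \emptyset$.

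Next I would verify that for each fixed $j, N$, the set
\[
B_{j,N} := \{t \in \R : X = \bigcup_{|n|\le N}(\tmap)^{-n}(U_j)\}
\]
is open in $\R$: if the finite open cover $\{(\vphi^{t_0})^{-n}(U_j)\}_{|n|\le N}$ covers the compact space $X$, then by joint continuity of $\flowF$ on $X \times \R$ and compactness of $X$, it continues to cover for $t$ in a neighborhood of $t_0$. Hence $A_j := \bigcup_N B_{j,N}$ is open, and the set $\{t : \tmap \text{ is minimal}\} = \bigcap_j A_j$ is $G_\de$. The transitive case produces an analogous $G_\de$ set using open conditions of the form $(\tmap)^{-n}(U_j) \cap U_k \neq \emptyset$ for pairs $(j,k)$.

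The main obstacle is the density of each $A_j$ in $\R$, from which density of $\bigcap_j A_j$ will follow by Baire. Here the minimality of $\flowF$ enters: choose an open $V_j$ with $\closure(V_j) \subseteq U_j$; by minimality and compactness there exists $T > 0$ such that every $\flowF$-orbit segment of length $T$ meets $V_j$. Given $t_0 \in \R$ and $\vep > 0$, the goal is to find $t \in (t_0-\vep, t_0+\vep)$ such that for every $x \in X$ some integer $n$ yields $\vphi^{nt}(x) \in U_j$. Using uniform continuity of $\flowF$ on the compact set $X \times [0,T]$, choose $\eta > 0$ such that $|s - s'|<\eta$ implies $d(\vphi^s(x), \vphi^{s'}(x))< \dist(V_j, X \setminus U_j)$ uniformly in $x$. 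The delicate step is then a Diophantine-style selection of $t$ near $t_0$ so that the multiples $\{nt\}_{n\in\Z}$ fall $\eta$-densely within every window $[s, s+T]$ for the relevant range; this transfers the continuous hitting of $V_j$ by $\vphi^{s}(x)$ to a discrete hitting of $U_j$ by some $\vphi^{nt}(x)$, placing $t$ in $A_j$.

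The transitive case is handled identically but requires the hitting condition only along the orbit of a single transitive point $x_0$ of $\flowF$, so the argument simplifies: for the open set $\{t : \SO^+_{\tmap}(x_0) \cap U_j \neq \emptyset\}$ (which is open in $t$ by continuity), density near any $t_0$ follows from the fact that $\SO^+_{\flowF}(x_0)$ is dense and a small perturbation argument akin to the one above.
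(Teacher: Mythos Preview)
The paper does not give its own proof of this proposition; it is cited from Fayad and used only as background. I therefore assess your sketch on its own merits.

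The $G_\de$ structure is correctly set up, and your density argument for the transitive case is essentially sound: fixing a flow-transitive point $x_0$, the set $\{s\ge 0:\vphi^s(x_0)\in U_j\}$ is open and unbounded, and for any $t_0\ne 0$ one produces $t$ near $t_0$ with $nt$ in this set by choosing a large hitting time $s$ and taking $n=\lfloor s/t_0\rfloor$, so that $s/n$ lies within $|t_0|/n$ of $t_0$. The minimal case, however, contains a genuine gap. You claim one can pick $t$ near $t_0$ so that the multiples $\{nt\}_{n\in\Z}$ fall $\eta$-densely in every window $[s,s+T]$; this is impossible as soon as $|t_0|>\eta$, since consecutive multiples of $t$ are exactly $|t|$ apart. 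What is actually required is weaker but still nontrivial: for \emph{every} $x\in X$ some multiple $nt$ must land in the open syndetic set $\{s:\vphi^s(x)\in U_j\}$. The difficulty is that this return set depends on $x$, and you must exhibit a single $t$ close to $t_0$ that works uniformly over all $x$. Your ``Diophantine-style selection'' is a placeholder for precisely this uniformity step, and the mechanism you describe does not supply it. To repair the argument you would need to show directly that the bad set $\{t:\bigcap_{n}\vphi^{-nt}(X\setminus U_j)\neq\emptyset\}$ has empty interior, which is where the real work in Fayad's proof lies.
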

	
	A similar result in the ergodic setting was proved by Pugh and Shub.
	
	\begin{Prop}[Theorem 1 in \cite{PS}]
		Let $\flowF$ be a continuous flow on $X$ and $\mu \in \Mes(X)$. If $(X, \flowF, \mu)$ is ergodic, then $(X,\tmap, \mu)$ is ergodic for all but countably many $t \in \R$.
	\end{Prop}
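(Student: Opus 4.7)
The plan is to reduce the question to a spectral-theoretic statement about the Koopman representation of the flow. Consider the one-parameter unitary group $\{U_t\}_{t \in \R}$ on $L^2(X,\mu)$ defined by $U_t f = f \circ \tmap$. By Stone's theorem there is a unique projection-valued measure $E$ on $\R$ such that $U_t = \int_\R e^{i \la t}\, dE(\la)$ for every $t \in \R$. The Koopman operator $U_t$ encodes the dynamics of $\tmap$ at the level of observables, and ergodicity translates into a one-dimensionality statement for an associated spectral subspace.

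Next, I would translate ergodicity into spectral language. A function $f \in L^2(X,\mu)$ is $\tmap$-invariant iff $U_t f = f$, which in the spectral representation means that $f$ lies in the range of $E\bigl(\{\la \in \R : e^{i\la t} = 1\}\bigr) = E\bigl((2\pi/t)\Z\bigr)$ for $t \neq 0$. Hence $(X,\tmap,\mu)$ is ergodic if and only if this range is one-dimensional. Ergodicity of $\flowF$, by exactly the same reasoning applied jointly to all $t$, is equivalent to $E(\{0\})$ having rank one (the invariant space consists only of constants). Because the non-atomic part of $E$ assigns measure zero to any countable set, the range of $E\bigl((2\pi/t)\Z\bigr)$ strictly contains the constants if and only if there exists a nonzero eigenvalue $\la$ of the infinitesimal generator (i.e. an atom of $E$) satisfying $\la t \in 2\pi \Z$.

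Finally, I would use separability to conclude. Since $X$ is compact metrizable and $\mu$ is a Borel probability measure, $L^2(X,\mu)$ is separable, so the set of atoms of $E$,
\[
P := \{\la \in \R : E(\{\la\}) \neq 0\},
\]
is at most countable (distinct eigenvalues of a unitary operator yield mutually orthogonal eigenspaces). Therefore the set of non-ergodic times is
\[
\bigcup_{\la \in P \setminus \{0\}} \{t \in \R : \la t \in 2\pi \Z\} = \bigcup_{\la \in P \setminus \{0\}} (2\pi/\la)\Z,
\]
a countable union of countable sets, and hence countable. The only delicate step is ensuring that the continuous part of the spectral measure does not contribute any invariant functions beyond the constants for a typical $t$; this is precisely the observation that continuous measures on $\R$ vanish on countable sets like $(2\pi/t)\Z$, so the entire obstruction comes from the point spectrum. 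The main conceptual obstacle, should one try to argue without spectral theory, would be to show directly that additional $\tmap$-invariant sets can appear only for countably many $t$—something that is essentially forced by this hidden spectral rigidity of one-parameter groups.
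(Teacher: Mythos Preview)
The paper does not supply its own proof of this proposition; it merely quotes it as Theorem~1 of Pugh--Shub \cite{PS}. Your spectral argument via the Koopman representation and Stone's theorem is correct and is in fact the classical route to this result (and essentially the one Pugh and Shub use): the $\tmap$-invariant functions are exactly the range of $E\bigl((2\pi/t)\Z\bigr)$, the continuous part of the spectral measure contributes nothing on a countable set, and separability of $L^2(X,\mu)$ forces the point spectrum $P$ to be countable, so the bad times form the countable set $\bigcup_{\la\in P\setminus\{0\}}(2\pi/\la)\Z$. There is nothing further to compare.
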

	
	The proof (due to Veech) of Theorem 3.3.34 in \cite{Hasselblatt-Fisher} implies the next result.
	\begin{Prop}\label{Unique_ergodic_tmap}
		If a continuous flow $\flowF$ is uniquely ergodic, then $\tmap$ is uniquely ergodic for all but countably many $t \in \R$.
	\end{Prop}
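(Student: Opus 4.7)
The plan is to combine the Pugh--Shub result stated immediately above with a standard averaging/extreme-point argument that exploits unique ergodicity of $\flowF$. Let $\mu$ denote the unique $\flowF$-invariant probability measure. By the preceding proposition, there is a countable set $E \subset \R$ such that for every $t \in \R \setminus E$ the time-$t$ map $\tmap$ is ergodic with respect to $\mu$. I will show that whenever $t \notin E$ and $t \neq 0$ the map $\tmap$ is in fact uniquely ergodic; this yields the statement with enlarged but still countable exceptional set $E \cup \{0\}$.

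Fix such a $t$ and let $\nu$ be any Borel probability measure invariant under $\tmap$. Since $\tmap$ commutes with every $\vphi^s$, each pushforward $\vphi^s_*\nu$ is also $\tmap$-invariant. Set
\[
\bar\nu := \frac{1}{t}\int_0^t \vphi^s_*\nu\,ds.
\]
For any continuous $\psi \colon X \to \R$ and any $r \in \R$, a change of variables gives
\[
\int \psi\circ\vphi^r\,d\bar\nu \;=\; \frac{1}{t}\int_r^{t+r}\int\psi\circ\vphi^u\,d\nu\,du,
\]
and the integrand $u \mapsto \int\psi\circ\vphi^u\,d\nu$ is $t$-periodic by $\tmap$-invariance of $\nu$, so this equals $\int\psi\,d\bar\nu$. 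Hence $\bar\nu$ is $\flowF$-invariant, and unique ergodicity of $\flowF$ forces $\bar\nu = \mu$.

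It remains to deduce $\nu = \mu$ from the identity $\mu = \frac{1}{t}\int_0^t \vphi^s_*\nu\,ds$ together with $\tmap$-ergodicity of $\mu$. This is the main obstacle: because one is averaging uncountably many $\tmap$-invariant measures, the fact that $\mu$ is an extreme point of the simplex of $\tmap$-invariant probabilities does not apply directly. I invoke instead the uniqueness of the Choquet (equivalently, ergodic) decomposition: substituting the $\tmap$-ergodic decomposition of each $\vphi^s_*\nu$ into the displayed identity writes $\mu$ as an integral over $\tmap$-ergodic measures, and since $\mu$ is itself $\tmap$-ergodic, uniqueness of this representation forces $\vphi^s_*\nu = \mu$ for Lebesgue-a.e.\ $s \in [0,t]$. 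Because $\flowF$ is continuous, $s \mapsto \vphi^s_*\nu$ is weak$^*$ continuous, so $\vphi^s_*\nu = \mu$ for every $s$, and taking $s = 0$ yields $\nu = \mu$, proving unique ergodicity of $\tmap$.
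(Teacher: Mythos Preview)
Your argument is correct. Note, however, that the paper does not supply its own proof of this proposition: it simply records that the statement follows from Veech's proof of Theorem~3.3.34 in Fisher--Hasselblatt and moves on, so there is no in-paper argument to compare against.

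That said, your route is a clean self-contained proof built on the preceding Pugh--Shub proposition. The averaging step producing a flow-invariant $\bar\nu$ is standard, and you correctly identify the only delicate point: writing the $\tmap$-ergodic measure $\mu$ as a continuous convex combination $\frac{1}{t}\int_0^t\vphi^s_*\nu\,ds$ of $\tmap$-invariant measures does not immediately force each term to equal $\mu$ from extremality alone. Invoking the simplex structure of the set of $\tmap$-invariant probabilities and the uniqueness of the ergodic (Choquet) decomposition is exactly the right fix, and the weak$^*$ continuity of $s\mapsto\vphi^s_*\nu$ (which holds because $\flowF$ is continuous and $X$ is compact) legitimately upgrades the a.e.\ conclusion to all $s$. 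One cosmetic remark: for $t<0$ the expression $\frac{1}{t}\int_0^t$ should be read as $\frac{1}{|t|}\int_t^0$, or one simply observes that $\tmap$ and $\vphi^{-t}$ share their invariant measures and restricts attention to $t>0$.
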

	
	On the other hand, sometimes it is enough that a property holds for a time-$t$ map  for some $t$ to guarantee that it holds for the flow. Examples of the latter properties are ergodicity and unique ergodicity (see \cite{Hasselblatt-Fisher} for more details). 
	
	In both Theorem \ref{thmA} and Theorem \ref{thmB}, we focus on zero FK-distance between pairs of points. 
	We will show that if the FK-distance between a pair of points with respect to a time-$t$ map vanishes for some $t$, then the FK-distance between these points with respect to the flow also vanishes.	
	
	\begin{Prop}\label{timet_FK}
		Let $x,y \in X$ and $\flowF$ be a continuous flow on $X$. If $\fk(x,y) = 0$ for some time-$t$ map, then $\fkflow(x,y)=0$. 
	\end{Prop}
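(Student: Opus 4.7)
The plan is to convert a discrete $(n, \de_0)$-matching with respect to the time-$\tau$ map $T = \vphi^\tau$ (we assume $\tau > 0$; the case $\tau < 0$ is analogous) into a continuous $\wt{(n\tau, \vep, \de)}$-matching for the flow, and then use Fact 2 to pass to all large times. Fix $\de > 0$; the goal is to prove $\tilde{f}_\de(x,y) < \de$, which, since $\de$ is arbitrary, forces $\fkflow(x,y) = 0$. First, uniform continuity of $\vphi$ on the compact set $[0,\tau] \times X$ yields $\de_0 \in (0, \de/4)$ such that $d(p,q) < \de_0$ implies $d(\vphi^r(p), \vphi^r(q)) < \de$ for every $r \in [0,\tau]$. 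Since $\fk(x, y) = 0$ with respect to $T$, for all sufficiently large $n$ there is a discrete $(n, \de_0)$-matching $\pi_n \colon D_n \to D_n'$ with $|D_n|/n > 1 - \de_0$.

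The central step is to build $h_n$ out of the \emph{aligned} subset
\[
E_n := \{i \in D_n : i+1 \in D_n \text{ and } \pi_n(i+1) = \pi_n(i)+1\},
\]
rather than from $\pi_n$ itself. A short counting argument yields $|E_n| \geq (1-3\de_0) n - 1$: every $i \in D_n \setminus E_n$ is either the right endpoint of a run of $D_n$ (at most $|D_n^c| + 1$ such indices) or precedes a distinct skipped element of $D_n'^c$ (at most $|D_n'^c|$ such), and both complements have cardinality at most $\de_0 n$. On $A_n := \bigcup_{i \in E_n}[i\tau, (i+1)\tau]$ define the piecewise translation $h_n(s) := s + (\pi_n(i)-i)\tau$ for $s \in [i\tau, (i+1)\tau]$ with $i \in E_n$. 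Alignment makes $h_n$ well-defined, strictly increasing, absolutely continuous with $h_n' \equiv 1$ a.e., and onto $A_n' := \bigcup_{i \in E_n}[\pi_n(i)\tau, (\pi_n(i)+1)\tau]$, with $\la(A_n) = \la(A_n') \geq (1-3\de_0) n\tau - \tau$. For $s = i\tau + r$ with $r \in [0,\tau]$ and $i \in E_n$, the flow cocycle gives $\vphi^s(x) = \vphi^r(T^i x)$ and $\vphi^{h_n(s)}(y) = \vphi^r(T^{\pi_n(i)} y)$, so the choice of $\de_0$ upgrades $d(T^i x, T^{\pi_n(i)} y) < \de_0$ into $d(\vphi^s(x), \vphi^{h_n(s)}(y)) < \de$.

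Thus $(A_n, h_n)$ realises a $\wt{(n\tau, 4\de_0, \de)}$-matching for all large $n$, giving $\tilde{f}_{n\tau, \de}(x, y) \leq 4\de_0$; Fact 2 propagates the bound to all $t \in [n\tau, (n+1)\tau)$ at the cost of an $O(1/n)$ error, yielding $\tilde{f}_\de(x,y) \leq 4\de_0 < \de$ as required. The main obstacle is precisely this passage from a discrete matching to a smooth continuous one: a naive piecewise-linear interpolation between consecutively matched pairs would have slope $(\pi_n(i+1) - \pi_n(i))/(i+1 - i)$, which can differ wildly from $1$ whenever $\pi_n$ skips over gaps in $D_n'$, destroying the derivative condition $|h'-1| < \vep$. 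Restricting attention to the aligned runs $E_n$ pins the slope to exactly $1$, and the counting certifies that this costs only an $O(\de_0)$-fraction of the indices, which is harmless because $\de_0$ is a free parameter.
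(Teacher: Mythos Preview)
Your proof is correct and follows the same blueprint as the paper's: reduce via uniform continuity of the flow on $[0,\tau]\times X$ so that closeness at the discrete matched times propagates along whole unit subintervals, then build the continuous matching as a piecewise translation indexed by the discrete matching. The paper (after reducing to $\tau=1$) simply sets $A=\bigcup_{k\in D_0}[k,k+1)$ over all matched indices $D_0$ (trimmed to land in $[0,n-1]$) and translates each $[k,k+1)$ by $\pi(k)-k$, without worrying about what happens when $k,k+1\in D_0$ but $\pi(k+1)>\pi(k)+1$: in that situation the resulting $h$ has a jump at $k+1$ and is not literally absolutely continuous on the connected component of $A$ containing that point. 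Your passage to the aligned subset $E_n=\{i\in D_n:\ i+1\in D_n,\ \pi_n(i+1)=\pi_n(i)+1\}$ is exactly the refinement that forces the translations to glue into a genuinely absolutely continuous map with $h_n'\equiv 1$, and your counting shows the price is only a constant multiple of $\de_0$. So your argument and the paper's are the same idea; yours is the more careful execution of it. (One cosmetic point: your appeal to Fact~2 uses $s<\tau$, which may violate the stated hypothesis $s\ge 1$, but the fact clearly holds for all $s\ge 0$ by the same reasoning, so this is harmless.)
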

	
	\begin{proof}
		Observe that it is enough to prove the proposition only for the time-$1$ map. Take $x,y \in X$ with $\fk (x,y) = 0$ for the time-$1$ map and $\vep >0$. We will show that there exists $t_0>1$ such that $\tilde{f}_{t,\vep}(x,y) \leq \vep$ for all $t \geq t_0$. 
		
		By continuity of the flow there exists $0<\de<\frac{\vep}{2}$ such that $d(\smap(w), \smap(z)) <~ \vep$ for $0 \leq s \leq 1$ for any $w,z \in X$ with $d(w,z) < \de$.  
		
		Since $\fk (x,y) = 0$ for the time-$1$ map, we know there exists $n_0 \in \N$ such that $\bar{f}_{n,\de} (x,y) \leq \de$ for every $n \in \N$ with $n \geq n_0$.
		
		Let $t_0 >  n_0$ be such that $\frac{t_0 \vep}{4}>1$. Fix $t \geq t_0$. Then, there exists $n \in \N$ with $n \geq n_0$ such that $n < t \leq n+1$. Since $\bar{f}_{n+1,\de} (x,y) \leq \de$, there exists $\pi \colon D \to D'$ an $(n + 1,\de)$-matching between $x$ and $y$ with $|\pi| \geq (1-\de)(n+1)$.
		
		Let $D_0 = D \cap [0,n-1] \cap \pi^{-1} (D' \cap [0,n-1])$. Note that 
		\begin{enumerate}
			\item $|D_0| \geq |D| - 2$;
			
			\item $[k, k+1) \subset [0,t]$ for every $k \in D_0$;
			
			\item $[\pi(k), \pi(k)+1) \subset [0,t]$ for every $k \in D_0$.
		\end{enumerate}
		
		From (1) we have
		$$	|D_0| \geq (1-\de)(n+1) - 2   \geq (1-\de)t -2
		> (1-\vep)t.$$
		
		Set 
		$$
		A =\bigcup_{k \in D_0} [k, k+1) \text{ and }A'= \bigcup_{k \in D_0} [\pi(k), \pi(k)+1).
		$$
		
		Let $h\colon A \to A'$ be the isometric translation from $[k, k+1)$ to $[\pi(k), \pi(k) +1)$ for each $k \in D_0$.
		By construction we have $A, A' \subseteq [0,t]$ and $\la(A)= \la(A') = |D_0| > (1-\vep)t$. Since $d(\vphi^k(x), \vphi^{\pi(k)}(y))< \de$ for every $k \in D_0$, from the choice of $\de$ we conclude that $d(\smap(x), \hsmap(y))< \vep$ for every $s \in A$. Therefore, $h$ is a $\wt{(t,\vep,\vep)}$-matching between $x$ and $y$ which completes the proof. 
	\end{proof}
	
	The flexibility in the definition of matchings for flows suggests that zero FK-distance between points $x,y \in X$ for a flow does not imply vanishing of $\fk(x,y)$ for all time-$t$ maps. Indeed this is not true in general: we prove in Section \ref{applications} that there exist flows with a pair of points $x,y \in X$ such that $\fkflow(x,y)=0$ but with $\fk(x,y)>0$ for a time-$t$ map.
	
	Another natural question regarding the relations between the definition of the FK-pseudometric for flows and for maps concerns suspension flows. A direct consequence of the Proposition \ref{timet_FK} is the following.
	
	\begin{corollary}\label{suspension_LK}
		Let $T\colon X \to X$ be a homeomorphism, $x,y \in X$ and $\flowF$ be the suspension flow over $T$. If $\fk(x,y) =0$ with respect to $T$, then for any $r,s \in [0,1)$ we have $\fkflow((x,r), (y,s)) = 0$ with respect to $\flowF$.
	\end{corollary}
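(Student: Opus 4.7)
The plan is to reduce the claim to an application of Proposition \ref{timet_FK} for the time-$1$ map $\vphi^1$ of the suspension flow, combined with the orbit invariance of the FK-pseudometric. First, I would handle the heights $r,s$ by orbit invariance: since $(x,r) = \vphi^r(x,0)$ lies in the orbit of $(x,0)$ and $(y,s) = \vphi^s(y,0)$ lies in the orbit of $(y,0)$, the orbit invariance of $\tilde{f}_\de$ carries over to $\fkflow$ and yields
$$\fkflow((x,r),(y,s)) = \fkflow((x,0),(y,0)).$$
Thus it suffices to establish $\fkflow((x,0),(y,0)) = 0$.

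The main step is to show that $\fk((x,0),(y,0)) = 0$ with respect to $\vphi^1$. The key observation is that on the base slice $X \times \{0\}$ the suspension metric collapses to the base metric:
$$d_T((a,0),(b,0)) = (1-0)\,d(a,b) + 0 \cdot d(T(a),T(b)) = d(a,b),$$
while $\vphi^1$ preserves $X \times \{0\}$ and acts there by $(a,0) \mapsto (T(a),0)$. Consequently, for every $n$ and every $\de > 0$, an $(n,\de)$-matching $\pi \colon D \to D'$ between $x$ and $y$ for $T$ is literally an $(n,\de)$-matching between $(x,0)$ and $(y,0)$ for $\vphi^1$, so $\bar{f}_{n,\de}((x,0),(y,0))$ computed for $\vphi^1$ equals $\bar{f}_{n,\de}(x,y)$ computed for $T$. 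Passing to the $\limsup$ in $n$ and then to the infimum in $\de$ transfers the equality $\fk(x,y) = 0$ for $T$ to $\fk((x,0),(y,0)) = 0$ for $\vphi^1$.

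Finally, Proposition \ref{timet_FK} applied to the time-$1$ map upgrades this to $\fkflow((x,0),(y,0)) = 0$, which combined with the first step completes the proof. I do not anticipate any genuine obstacle: the argument is essentially a bookkeeping reduction that exploits the explicit formula for $d_T$ at height zero and offloads the actual work to Proposition \ref{timet_FK}.
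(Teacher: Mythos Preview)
Your proposal is correct and follows essentially the same approach as the paper: reduce to the base slice via orbit invariance of $\fkflow$, observe that $d_T$ restricted to $X\times\{0\}$ equals $d$ and that $\vphi^1$ acts there as $T$ so the discrete FK-pseudometric transfers, and then invoke Proposition~\ref{timet_FK}. The only difference is that you spell out the transfer of $(n,\de)$-matchings explicitly, whereas the paper states it in one line.
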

	
	\begin{proof}
		Let $x,y \in X$ be such that $\fk(x,y)=0$ with respect to $T$. For $z \in X$, denote $\wh{z} = (z,0) \in X_T$. Since $\fkflow$ is invariant along orbits we can assume, without loss of generality that  $r=s=0$. Now, it is enough to show that $\fkflow(\wh{x}, \wh{y})=0$.

		Let $\pfk$ denote the FK-pseudometric on $X_T$ with respect to $\vphi^1$. Note that $d_T(\wh{w}, \wh{z}) = d(w,z)$ for every $w,z \in X$ and that $\vphi^1(\wh{z})=\wh{T(z)}$ for every $z  \in X$. So $\fk(x,y)=0$ implies $\pfk(\wh{x}, \wh{y})=0$ and consequently, by Proposition \ref{timet_FK}, $\fkflow(\wh{x}, \wh{y})=0$.
	\end{proof}
	
	\section{Loosely Kronecker flows}\label{LK_flows}
	The goal of this section is to prove Theorem \ref{thmA} characterizing loosely Kronecker continuous flows. First, we need to recall some properties of the space of measurable partitions. 
	
	Whenever we mention a \textbf{partition} we mean a finite partition of $X$ into Borel sets called \textbf{cells}. We write $\Part(X)$ for the set of all partitions of $X$.
	
	To any ordering of a given partition $\SP = \{P_1, \ldots, P_n\}$ we can associate a function on $X$ defined by $\SP(x) = j$ for $x \in P_j$.	
	
	Let $\mu \in \SM(X)$. A pseudometric on $ \Part(X)$ is defined by
	$$d_\mu(\SP,\SQ) = \inf \left\{\frac{1}{2}\sum_{j=1}^{\max \{|\SP|,|
		\SQ|\}} \mu(P_j \triangle Q_j) \right\} =  \inf \left\{ \mu\left(\left\{x \in X: \SP(x) \neq \SQ(x)\right\} \right)\right\},$$
	where the infimum is taken over all possible orderings for the partitions $\SP$ and $\SQ$. In case $|\SP| \neq |\SQ|$ we add empty cells to the partition with fewer elements in order to have two partitions with the same cardinality. If we identify partitions $\SP,\SQ$ such that $d_\mu(\SP, \SQ)=0$, then $d_\mu$ is indeed a metric on $\Part(X)$ (here we abuse the terminology and treat equivalence classes of partitions as partitions). 
	
	Since our argument uses Ratner's criterion for loose Bernoullicity we recall her results from \cite{Ratner_2017}.
	
	\begin{defi}\label{matching_ratner}
		Let $x,y \in X$, $t>1$, $\SP$ be a partition of $X$ and $\vep >0$. We say that $x$ and $y$ are \textbf{$(t, \vep, \SP)$-matchable} if there exist measurable sets  $A,A' \subseteq [0,t]$ with $\la(A) > (1-\vep)t$, $\la(A') > (1-\vep)t$ and an increasing absolutely continuous onto map $h:A \to A'$ satisfying for all $s \in A$:
		\begin{enumerate}
			\item $|h'(s) - 1|< \vep $,
			
			\item $\SP(\varphi^s(x)) =  \SP(\varphi^{h(s)} (y)) .$
		\end{enumerate}
		We call $h$ a \textbf{$(t, \vep, \SP)$-matching} between $x$ and $y$.
	\end{defi}
	
	For $x,y \in X$, partition $\SP \in \Part(X)$, $\vep >0$, and $t >1$ we set:
	\begin{align*}
		&f_t(x,y,\SP) = \inf \{ \vep > 0 :  x \text{ and }y \text{ are } (t, \vep, \SP) \text{-matchable}\}, \\ 
		&B_t(x,\vep, \SP)= \{y \in X : f_t(x,y, \SP)< \vep\}.
	\end{align*}	
	
	We remark that $f_t( \cdot, \cdot,\SP)$ is not a metric since it does not satisfy the triangle inequality, however it is not far from being a metric as $z,y \in B_t(x,\vep, \SP)$ implies $f_t(z,y,\SP) \leq 5\vep$. 
	
	The set $B_t(x,\vep, \SP)$ is called the \textbf{$(t, \SP)$-ball} of radius $\vep>0$ centered in $x$.
	
	A family $\SA$ of $(t,\SP)$-balls of radius $\vep >0$ is called an \textbf{$(\vep, t, \SP)$-cover} of $X$ if $\mu(\cup_{A \in \SA} A) > 1-\vep$.
	
	Let $K_t(\vep,\SP) = \inf \{|\SA| : \SA \text{ is a an } (\vep,t,\SP)\text{-cover of } X\}$, where $|\SA|$ represents the cardinality of $\SA$.
	
	Let $U$ denote the family of all positive non-decreasing functions from $\R^+$ onto itself.  For $u \in U$ denote: $$\be(u, \vep, \SP) = \liminf_{t \to \infty} \frac{\log K_t(\vep, \SP)}{u(t)} ;$$
	$$e(u,\SP) = \limsup_{\vep \to 0} \be(u, \vep, \SP);$$
	$$ e(\flowF,u)  = \sup \{e(u,\SP) : \SP \in \Part(X)\}.$$
	
	In a previous work from 1981, Ratner \cite{Ratner_invariants} used a definition of matching that requires the $(t, \vep, \SP)$-matching to be an increasing function preserving the Lebesgue measure. Later, she proved that for ergodic flows the quantity $e(\flowF,u)$ remains the same if we use absolutely continuous or measure-preserving matching (Theorem 4 in \cite{Ratner_2017}), recovering a characterization of loosely Kronecker flows originally proved in 1981.
	
	\begin{theorem}[Ratner]\label{ratnerLK}
		An ergodic flow $\flowF$ is loosely Kronecker if and only if $e(\flowF,u)=0$ for all $u \in U$.
	\end{theorem}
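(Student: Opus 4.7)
The plan is to reduce the statement to Ratner's earlier result in \cite{Ratner_invariants}, which established the same characterization but with \emph{measure-preserving} matchings (i.e.\ maps $h$ with $h'(s)=1$ almost everywhere, in particular piecewise translations) in place of the absolutely continuous matchings of Definition~\ref{matching_ratner}. Since every measure-preserving matching is in particular absolutely continuous, the corresponding complexity invariant is trivially at least $e(\flowF,u)$, giving one half of the comparison for free. Provided the two invariants coincide, the forward implication (loose Kronecker $\Rightarrow e(\flowF,u)=0$ for all $u \in U$) follows immediately from \cite{Ratner_invariants}, and so does the reverse. So the real task is to show that the two invariants are equal.

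The heart of the argument is therefore an approximation: given an absolutely continuous $(t,\vep,\SP)$-matching $h:A\to A'$ with $|h'(s)-1|<\vep$, I would subdivide $A$ into many short subintervals $I_k$ on which $h$ is close to affine with slope near $1$, and then on each $I_k$ replace $h$ by an isometric translation onto a sub-interval of $h(I_k)$, throwing away a small boundary piece of $I_k$ to accommodate the slope discrepancy. By keeping the discarded pieces uniformly short and controlling their aggregate length via the bound $|h'-1|<\vep$, the resulting piecewise translation will be defined on a set of measure still exceeding $(1-\vep')t$ with $\vep'$ only slightly larger than $\vep$, and it will still match $\SP$-labels wherever the original matching does, by the continuity of the labels along orbit segments. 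This produces a measure-preserving $(t,\vep',\SP)$-matching, which is what is needed to reverse the inequality between the two invariants.

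The main obstacle is executing the approximation uniformly enough to survive the nested $\limsup$, $\liminf$, and supremum operations in the definition of $e(\flowF,u)$. One must verify that the asymptotics of $K_t(\vep,\SP)$ computed with measure-preserving matchings agree with those computed with absolutely continuous matchings for every fixed $\SP$ and every $u \in U$, and that the inflation $\vep \mapsto \vep'$ can be made to vanish quickly enough not to disturb the outer $\limsup_{\vep\to 0}$ that defines $e(u,\SP)$. Once this uniform comparison is in place, the characterization follows by invoking Ratner's 1981 construction of the Kakutani equivalence with a Kronecker flow from the vanishing of the measure-preserving invariant.
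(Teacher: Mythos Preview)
Your overall strategy is exactly what the paper does: it does not prove Theorem~\ref{ratnerLK} from scratch but reduces it to Ratner's 1981 result \cite{Ratner_invariants} (stated for measure-preserving matchings) via the fact that for ergodic flows the invariant $e(\flowF,u)$ is unchanged when absolutely continuous matchings are replaced by measure-preserving ones. The paper simply cites this last fact as Theorem~4 in \cite{Ratner_2017} and moves on; you go further and try to sketch why it holds.

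That sketch, however, has a genuine gap. Your claim that the piecewise-translation approximation ``will still match $\SP$-labels wherever the original matching does, by the continuity of the labels along orbit segments'' is not justified: $\SP$ is an arbitrary finite \emph{measurable} partition, so the function $s\mapsto \SP(\varphi^s(z))$ is merely measurable, not continuous. Concretely, if on a short interval $I_k$ you replace $h$ by a translation $\tilde h(s)=s+c_k$, then for $s\in I_k$ you know $\SP(\varphi^s(x))=\SP(\varphi^{h(s)}(y))$, but you need $\SP(\varphi^s(x))=\SP(\varphi^{\tilde h(s)}(y))$, i.e.\ $\SP(\varphi^{h(s)}(y))=\SP(\varphi^{\tilde h(s)}(y))$. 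The points $\varphi^{h(s)}(y)$ and $\varphi^{\tilde h(s)}(y)$ lie on the same orbit at times differing by at most $\vep|I_k|$, but without any regularity of $\partial\SP$ there is no bound on how often the orbit crosses cell boundaries, so this equality can fail on a set of $s$ of large measure. The ``inflation $\vep\mapsto\vep'$'' you allow does not absorb this error, because the error is not controlled by $\vep$ at all. Ratner's actual argument in \cite{Ratner_2017} is more delicate and does not proceed by naive pointwise approximation of $h$; if you want to fill this in yourself rather than cite it, you will need an additional mechanism (for instance, controlling the time the orbit spends near $\partial\SP$ via the ergodic theorem, or working with a suitable class of partitions first and then passing to the limit) to handle the label mismatch.
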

	
	The proof of Theorem \ref{ratnerLK} in \cite{Ratner_invariants} relies on a characterization of loosely Kronecker flows due to Feldman \cite{Feldman_rentropy} which states that an ergodic flow is loosely Kronecker if and only if for any $\vep >0$ and $\SP \in \Part(X)$ we have $K_t(\vep, \SP) =1$ (with respect to measure-preserving matchings) for every $t$ large enough.
	
	Since the FK-pseudometric is a topological notion, we focus on partitions with good topological properties and we show that they are enough to prove $e(\flowF,u)=0$. In fact, we use the these partitions to show that for for any $\vep >0$ and $\SP \in \Part(X)$ we have $K_t(\vep, \SP) =1$ (with respect to absolutely continuous matchings) for every $t$ large enough.
	
	We start by fixing $\SU$ a countable basis for the topology of $X$. We can assume without loss of generality that $\SU$ is closed under finite unions and intersections. We say that a partition is \textbf{$\SU$-regular }if at most one cell does not belong to $\SU$. 
	
	\begin{defi}
		For a given $\mu \in \SM(X)$, we say that a partition $\SP \in \Part(X)$ is \textbf{$\vep$-essentially open} if it is $\SU$-regular and if there is a cell $P \in \SP$ that does not belong to $\SU$, then it satisfies $\mu(P)< \vep$. 
	\end{defi}
	
	Besides having good topological properties, for each $\mu \in \SM(X)$ the family of  $\vep$-essentially open partitions constitutes a dense set in the space of measurable partitions.
	
	\begin{Prop}\label{essent_open_part}
		Let $\mu \in \SM(X)$ and $\vep>0$. Then the set of $\vep$-essentially open partitions is dense in $\Part(X)$ with respect to $d_\mu$.
	\end{Prop}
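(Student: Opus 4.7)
The plan is, given $\SP = \{P_1, \ldots, P_n\} \in \Part(X)$ and a target accuracy $\eta > 0$, to build an $\vep$-essentially open partition $\SQ$ with $d_\mu(\SP, \SQ) < \eta$ by approximating each cell of $\SP$ from the inside by a closed set, thickening these closed sets into pairwise disjoint $\SU$-sets, and collecting the small leftover mass into a single ``garbage'' cell whose measure is below $\vep$.

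First I would fix a parameter $\delta > 0$ depending on $n$, $\eta$, and $\vep$, to be specified at the end. Since $\mu$ is a Borel probability measure on the compact metric space $X$, it is regular, so for each $i$ I pick a closed set $F_i \subseteq P_i$ with $\mu(P_i \setminus F_i) < \delta$. The sets $F_1, \ldots, F_n$ are pairwise disjoint and closed; by normality of $X$ they can be separated by pairwise disjoint open sets $W_i \supseteq F_i$, and by outer regularity applied to each $F_i$ I may shrink the $W_i$ to ensure $\mu(W_i \setminus F_i) < \delta$ as well. Next I promote each $W_i$ to an element of $\SU$: write $W_i$ as a countable union of basis elements and, since $\SU$ is closed under finite unions, pick a finite subunion $V_i \in \SU$ with $V_i \subseteq W_i$ and $\mu(W_i \setminus V_i) < \delta$ (possible by continuity of $\mu$ along this increasing sequence). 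The $V_i$ remain pairwise disjoint because $V_i \subseteq W_i$, and I set $\SQ = \{V_1, \ldots, V_n, Q_{n+1}\}$ with $Q_{n+1} = X \setminus \bigcup_{i=1}^n V_i$; the first $n$ cells lie in $\SU$ by construction.

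It remains to estimate. Using $V_i \setminus P_i \subseteq W_i \setminus F_i$ and $P_i \setminus V_i \subseteq (P_i \setminus F_i) \cup (F_i \setminus V_i)$, each $\mu(P_i \triangle V_i)$ is bounded by $3\delta$; summing gives $\sum_i \mu(V_i) > 1 - 2n\delta$, hence $\mu(Q_{n+1}) < 2n\delta$. Matching $V_i \leftrightarrow P_i$ and $Q_{n+1} \leftrightarrow \emptyset$ in the definition of $d_\mu$ yields $d_\mu(\SP, \SQ) \leq \tfrac{5n\delta}{2}$. Choosing $\delta < \min\bigl(\vep/(2n),\, 2\eta/(5n)\bigr)$ simultaneously forces $\mu(Q_{n+1}) < \vep$, so $\SQ$ is $\vep$-essentially open, and $d_\mu(\SP, \SQ) < \eta$. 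There is no deep obstacle: the only point requiring care is arranging the $\SU$-set $V_i$ inside the separating open neighborhood $W_i$ with almost full measure, which is routine precisely because $\SU$ is a countable basis closed under finite unions.
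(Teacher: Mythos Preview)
Your proof is correct and follows essentially the same route as the paper's: approximate each $P_i$ from inside by a closed (the paper says compact, which is the same here) set, enclose these in disjoint $\SU$-sets of nearly the same measure, and dump the remainder into a single small cell. The only cosmetic difference is that the paper arranges $K_i \subseteq Q_i \in \SU$ directly via a finite subcover from compactness, whereas you approximate $W_i$ in measure by a finite subunion $V_i$ without insisting $F_i \subseteq V_i$; both lead to the same estimates up to constants.
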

	
	\begin{proof}
		Let $\SP=\{P_1,\ldots, P_n\} \in \Part(X)$. Fix $0< \de < \vep$.  By regularity of $\mu$, for each $i \in \{1,\ldots,n\}$ there exists a compact $K_i \subseteq P_i$  such that $\mu(P_i\backslash K_i)< \frac{\de}{4n}$. Take $\{Q_1, \ldots, Q_n \} \subseteq \SU$ disjoint such that $K_i \subseteq Q_i$ and $\mu(Q_i \backslash K_i)< \frac{\de}{4n}$ for $i \in \{1,\ldots,n\}$. Set $Q_{n+1} = X \backslash \cup_{i=1}^n Q_i$ and $\SQ = \{Q_1,\ldots,Q_{n+1}\}$.
		
		Observe that since $Q_{n+1} \subset \cup_{i=1}^n P_i\backslash K_i$ we have $\mu(Q_{n+1})< \frac{\de}{4}< \vep$ which means that $\SQ$ is $\vep$-essentially open. Moreover, $\mu(P_j \triangle Q_j)<\frac{\de}{2n} $ for every $j \in \{1,\ldots,n\}$ and therefore  $d_\mu(\SP, \SQ)<\de$ which completes the proof.
	\end{proof}
	
	One direction of Theorem \ref{thmA} is a direct consequence of Garc\'ia-Ramos and Kwietniak's result (Theorem 4.5 in \cite{KG-R}) together with some properties of the FK-pseudometric already presented. 
	
	For the other implication in Theorem \ref{thmA}, we use Ratner's criterion. We will also show that it is possible to estimate $e(u, \SP)$ for any partition $\SP$ by $e(u, \SQ)$ where $\SQ$ is a partition close to $\SP$. Therefore the density of essentially open partitions will allow us to focus on them in order to compute $e(u, \SP)$ for any partition $\SP \in \Part(X)$.
	
	\begin{lemma}\label{key_lemma}
		Let $\flowF$ be a continuous flow on $X$, $\mu \in \Erg(X)$, and $\SP,\SQ \in \Part(X)$. For every $\de>0$ there exist $H \subset \gen(\mu)$ and $t_0=t_0(\SP,\SQ)>1$ such that $\mu(H) > 1-\de$ and for every $x,y \in H$ and $t \geq t_0$ we have $$f_t(x,y,\SP) \leq 4 f_t(x,y,\SQ) + 2 d_\mu(\SP,\SQ) + \de.$$
	\end{lemma}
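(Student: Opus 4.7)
The plan is to take a $(t,\vep,\SQ)$-matching $h \colon A \to A'$ between $x$ and $y$ and restrict it to a subset on which the two partitions $\SP$ and $\SQ$ agree along both orbits; on that subset the restriction automatically becomes a matching for $\SP$, and the price of the restriction is controlled by the time each orbit spends in the symmetric-difference set of the partitions. First I would fix an ordering of $\SP$ and $\SQ$ realizing the infimum defining $d_\mu(\SP,\SQ)$, so that $D = \{z \in X : \SP(z) \neq \SQ(z)\}$ satisfies $\mu(D) = d_\mu(\SP,\SQ)$; outside $D$ the two partitions coincide as functions.

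The main preparatory step is a uniform Birkhoff estimate. Applying the ergodic theorem to $\mathbf{1}_D$ gives $\mu_{x,t}(D) \to \mu(D)$ for $\mu$-a.e.\ $x$, and combined with $\mu(\gen(\mu)) = 1$ this holds on a set of full measure. Egorov's theorem then produces $H \subseteq \gen(\mu)$ with $\mu(H) > 1-\de$ and a threshold $t_0 > 1$ such that $\mu_{x,t}(D) \leq d_\mu(\SP,\SQ) + \eta$ for every $x \in H$ and every $t \geq t_0$, where $\eta > 0$ is an auxiliary parameter to be fixed at the end in terms of $\de$.

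Now fix $x,y \in H$ and $t \geq t_0$. The case $f_t(x,y,\SQ) \geq 1/4$ is trivial since then the right-hand side of the target inequality already exceeds $1$. Otherwise pick $\vep$ slightly greater than $f_t(x,y,\SQ)$, keeping $\vep < 1/2$, and take a $(t,\vep,\SQ)$-matching $h \colon A \to A'$. Set
\[
A'' = \{s \in A : \varphi^s(x) \notin D \text{ and } \varphi^{h(s)}(y) \notin D\}, \qquad A''' = h(A'').
\]
For $s \in A''$ the chain $\SP(\varphi^s(x)) = \SQ(\varphi^s(x)) = \SQ(\varphi^{h(s)}(y)) = \SP(\varphi^{h(s)}(y))$ holds, so $h|_{A''} \colon A'' \to A'''$ automatically satisfies the partition condition for $\SP$ and inherits the derivative bound $|h'-1| < \vep$; it therefore witnesses the desired matching as soon as $\la(A'')$ and $\la(A''')$ both exceed $(1-\vep')t$ for the target $\vep'$.

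It remains to bound $\la(A \setminus A'')$ and $\la(A' \setminus A''')$. For the first, decompose into $\{s \in A : \varphi^s(x) \in D\}$, of measure at most $t\mu_{x,t}(D)$, and $h^{-1}(\{u \in A' : \varphi^u(y) \in D\})$, which by the change of variables $(h^{-1})' \leq 1/(1-\vep) \leq 1+2\vep$ (valid since $\vep<1/2$) has measure at most $(1+2\vep)t\mu_{y,t}(D)$. A symmetric argument using $h' \leq 1+\vep$ controls $\la(A' \setminus A''')$. Plugging in $\mu_{x,t}(D), \mu_{y,t}(D) \leq d_\mu(\SP,\SQ) + \eta$ and letting $\vep \searrow f_t(x,y,\SQ)$ yields, after elementary arithmetic, a bound of the form $f_t(x,y,\SP) \leq 3 f_t(x,y,\SQ) + 2 d_\mu(\SP,\SQ) + C\eta$ for a universal constant $C$; choosing $\eta$ small enough in terms of $\de$ absorbs $C\eta$ into $\de$ and delivers the inequality (with room to spare, since $3 \leq 4$). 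The most delicate point is the bookkeeping of the $1/(1-\vep)$ distortion factor: expanding $1/(1-\vep) \leq 1 + 2\vep$ ensures that the extra distortion contributes to the coefficient of $f_t(x,y,\SQ)$ rather than inflating the coefficient of $d_\mu(\SP,\SQ)$, which is precisely what allows $2$ in front of $d_\mu(\SP,\SQ)$.
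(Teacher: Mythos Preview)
Your argument is correct and follows essentially the same route as the paper: fix an optimal ordering so that $D=\{\SP\neq\SQ\}$ has measure $d_\mu(\SP,\SQ)$, use Birkhoff plus Egorov to get uniform control of $\mu_{z,t}(D)$ on a large set $H$, then restrict a $(t,\vep,\SQ)$-matching to the times where both orbits avoid $D$ and use the chain $\SP=\SQ=\SQ=\SP$ to upgrade it to a $\SP$-matching, estimating the loss via the derivative bound on $h$. The only cosmetic differences are that the paper works directly with $\de/2$ instead of an auxiliary $\eta$ and uses the multiplicative lower bound $\la(h^{-1}(\cdot))\geq(1-\vep)\la(\cdot)$ rather than your $1/(1-\vep)\leq 1+2\vep$, ending with the constant $4$ where your bookkeeping gives $3$.
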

	
	\begin{proof}
		Assume that $\SP$ and $\SQ$ are ordered optimally with respect to $d_\mu$. Let $D= \{z \in X : \SP(z) \neq \SQ(z)\}$ and $C = X \backslash D$. For any $z \in X$ and $t \in \R^+$ we will denote by $C(z,t) = \{s \in [0,t] : \smap(z) \in C\}$. Since $\mu$ is ergodic, applying the Birkhoff Ergodic Theorem to the characteristic function on $D$, we obtain that for $\mu$-almost every $z \in X$ it holds
		$$\lim_{t \to \infty} \mu_{z,t}(D) = \lim_{t \to \infty}\int_{0}^{t} \chi_D(\smap(z)) =\int_X \chi_D d\mu= \mu(D) .$$ So by Egorov's Theorem there exists $H \subseteq \gen(\mu)$ and $t_0>0$ such that $\mu(H) > 1-\de$ and for every $z \in H$ and $t\geq t_0$ we have
		\begin{equation}\label{unif_conv}
			| \mu_{z,t}(D)- \mu(D)| = |\mu_{z,t}(D)- d_{\mu}(\SP, \SQ)|  <\frac{\de}{2}.
		\end{equation}
		
		We will show that if $x,y \in H$ are $(t,\vep, \SQ)$-matchable, then they also are $(t, 4 \vep + 2d_\mu(\SP,\SQ) + \de, \SP)$-matchable. Let $x,y \in H$ and $h: A_0 \to A_0'$ be a $(t,\vep, \SQ)$-matching between $x$ and $y$.
		
		We will consider the map $h|_A: A \to h(A)$ where $A$ is defined as$$A = A_0 \cap C(x,t)\cap h^{-1}(A_0' \cap C(y,t)).$$
		From (\ref{unif_conv}) we have 
		\begin{equation}\label{eq_Cxt}
			\frac{\la(C(x,t))}{t} > 1- d_\mu(\SP,\SQ) - \frac{\de}{2},
		\end{equation}
		and the same holds for $\la(C(y,t))$. Using these estimates together with the control of the measure by the derivative of $h$ (see condition (1) in Definition \ref{matching_ratner}) and the fact that $\frac{\la(A_0')}{t} > 1 - \vep$ we have \begin{equation}\label{eq_sizeA0}
			\frac{\la(h^{-1}(A_0' \cap C(y,t)))}{t} \geq \frac{\la(A_0' \cap C(y,t))}{t} (1-\vep)> 1- 2\vep - d_\mu(\SP,\SQ) - \frac{\de}{2}.
		\end{equation}
		Thus putting \eqref{eq_Cxt} and \eqref{eq_sizeA0} together we obtain
		\begin{equation}
			\frac{\la(A)}{t} > 1 - 3 \vep - 2d_\mu(\SP,\SQ) - \de.
		\end{equation}
		Using again the control of the measure by the derivative of $h$  and (4.4) we have
		$$\frac{\la(h(A)) }{t} \geq \frac{\la(A)}{t}(1-\vep)> 1 - 4 \vep - 2d_\mu(\SP,\SQ) - \de.$$
		
		By construction, for every $s \in A$ we have $\smap(x) \notin D$ and $\hsmap(y) \notin D$ which implies $$\SP(\smap(x)) = \SQ(\smap(x)) = \SQ(\hsmap(y)) = \SP(\hsmap(y)).$$ Therefore, $h|_A$ is a $(t, 4 \vep + 2d_\mu(\SP,\SQ) + \de, \SP)$-matching between $x$ and $y$.
	\end{proof}
	
	The last ingredient is due to Ornstein, Rudolph and Weiss (see Chapter 7 in \cite{ORW}). 
	The result was first obtained by Rudolph in \cite{Rudolph_timet} for loosely Bernoulli flows (the positive entropy case) and a modification of the argument led to an analogous result for loosely Kronecker flows.
	
	\begin{theorem}\label{timet_LK}
		Every ergodic time-$t$ map of a loosely Kronecker flow is loosely Kronecker.
	\end{theorem}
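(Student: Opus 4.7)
The plan is to combine Ratner's characterization (Theorem \ref{ratnerLK}) in the continuous case with the analogous Feldman-type criterion in the discrete case, and to convert flow matchings into discrete matchings for the time-$t$ map. Write $T = \vphi^t$, which is ergodic by hypothesis, and recall the discrete version of Feldman's criterion: an ergodic automorphism $T$ is loosely Kronecker if and only if for every finite partition $\SP$ and every $\vep > 0$, the discrete $\bar f_n$-cover number $K_n^T(\vep,\SP)$ equals $1$ for all $n$ sufficiently large. By Proposition \ref{essent_open_part} I may restrict to partitions $\SP$ that are $\eta$-essentially open, where $\eta > 0$ is fixed in advance and will be chosen much smaller than $\vep$.

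The preparatory step is to build a flow-adapted refinement of $\SP$. Using uniform continuity of $\Phi$ on $X \times [-t,t]$ together with regularity of $\mu$, I choose $\de > 0$ and a partition $\SQ$ refining $\SP$ such that, on a set of $\mu$-measure at least $1 - \eta$, any two points lying in a common cell of $\SQ$ stay within distance $\de$ under every $\vphi^s$ with $|s| \le t$, and this $\de$ is small enough that such points then lie in the same cell of $\SP$. Applying Ratner's theorem to $(X, \Phi, \mu)$ and $\SQ$ yields $K^{\Phi}_s(\eta, \SQ) = 1$ for all $s$ large. Hence for all $n$ sufficiently large there is a set of $\mu$-measure at least $1 - \eta$ on which any two points $x, y$ admit a $(nt, \eta, \SQ)$-matching $h \colon A \to A'$ in the sense of Definition \ref{matching_ratner}.

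The key step is to extract from $h$ a discrete $(n, C\eta, \SP)$-matching for $T$. Let $D$ consist of those $k \in \{0, \ldots, n-1\}$ with $kt \in A$ and with $h(kt)$ lying within $t/4$ of some integer multiple of $t$, and set $\pi(k) = \lfloor h(kt)/t + \tfrac{1}{2} \rfloor$. The derivative bound $|h'-1| < \eta$ forces $\pi$ to be strictly increasing once $\eta$ is small, and the complement $\{0,\ldots,n-1\} \setminus D$ has cardinality $O(\eta n)$ coming from $\la(A^c \cap [0,nt]) \le \eta nt$ combined with a discrete density estimate for $h(kt) \bmod t$. The required identity $\SP(T^k x) = \SP(T^{\pi(k)} y)$ for $k \in D$ then follows by transporting $\SQ(\vphi^{kt} x) = \SQ(\vphi^{h(kt)} y)$ through the shift of size $|\pi(k)t - h(kt)| \le t/2$, using the defining stability property of the refinement $\SQ$.

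The main obstacle, and the reason for the delicate choice of $\SQ$, is exactly this partition-transfer step: one must simultaneously absorb the losses coming from non-open cells of $\SP$ (paid via essential openness), from points where the flow fails to be uniformly equicontinuous at the relevant scale, and from the discrete sampling of $h$ at multiples of $t$, while keeping the total error below the prescribed $\vep$. Ergodicity of $T$ is then invoked to upgrade the measure-theoretic cover estimate obtained above into a genuine Feldman $\vep$-cover, via the Birkhoff theorem applied to $T$ (justifying the passage from "a set of measure $\ge 1-\eta$ of pairs admit a discrete matching" to "a single $(n, \vep, \SP)$-ball covers $X$ up to $\vep$-mass"). This yields $K_n^T(\vep, \SP) = 1$ for all large $n$ and every essentially open $\SP$, and the density statement of Proposition \ref{essent_open_part} then extends the conclusion to all finite partitions, completing the proof.
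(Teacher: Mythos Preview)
The paper does not actually prove Theorem \ref{timet_LK}; it quotes the result from Ornstein--Rudolph--Weiss (Chapter 7 of \cite{ORW}), noting that Rudolph first proved the positive-entropy analogue and that a modification handles the zero-entropy case. So there is no ``paper's own proof'' to compare against, and your proposal must be judged on its own.

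Your argument has a genuine gap at the ``key step''. The discrete density estimate you invoke for $h(kt)\bmod t$ is simply false. A $(nt,\eta,\SQ)$-matching for the flow can perfectly well be (close to) a pure time shift $h(s)=s+c$ with $c$ having no relation to $t$; in that case $h(kt)\bmod t \approx c\bmod t$ for \emph{every} $k$, and if $c\bmod t\in[t/4,3t/4]$ your set $D$ is empty, not of density $1-O(\eta)$. More basically, $A\subseteq[0,nt]$ is just a measurable set of large Lebesgue measure, so there is no reason any of the sample points $kt$ lie in $A$ at all, and $h(kt)$ need not even be defined. These are not technicalities: the whole difficulty of the Rudolph/ORW theorem is precisely that a flow matching carries an arbitrary real time shift which need not be compatible with the lattice $t\Z$, and bridging that mismatch requires a genuinely different mechanism (in the literature, via the special-flow representation and the relation between cross-section maps, induced maps, and the time-$t$ map under Kakutani equivalence) rather than a direct sampling of $h$ at multiples of $t$.

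Even if the density issue were repaired, the partition-transfer step is also not right as written. From $\SQ(\vphi^{kt}x)=\SQ(\vphi^{h(kt)}y)$ and $|h(kt)-\pi(k)t|\le t/2$ you would need $\SP(\vphi^{h(kt)}y)=\SP(\vphi^{\pi(k)t}y)$, i.e.\ that a \emph{single} point stays in the same $\SP$-cell when flowed for time up to $t/2$; your stability hypothesis on $\SQ$ says instead that \emph{two} points in a common $\SQ$-cell stay $\de$-close under the flow, which is a different statement and does not yield what you need.
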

	
	Now that we have all the ingredients, we recall Theorem \ref{thmA} and present the proof.
	
	\begin{Thm}
		Let $\flowF$ be a continuous flow on $X$ and $\mu \in \Erg(X)$. The measure-preserving flow $(X, \flowF, \mu)$ is loosely Kronecker if and only if there exists a Borel set $H \subseteq X$ such that $\mu(H) = 1$ and $\fkflow(x,y) = 0$ for every $x,y \in H$.
	\end{Thm}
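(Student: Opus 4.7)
The plan is to prove the two implications of Theorem \ref{thmA} separately.

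For the ``only if'' direction, I would pass to an ergodic time-$t$ map: by the Pugh--Shub result stated in Section 3 (ensuring $(X,\vphi^t,\mu)$ is ergodic for all but countably many $t$) combined with Theorem \ref{timet_LK}, there exists $t>0$ such that $(X,\vphi^t,\mu)$ is an ergodic loosely Kronecker automorphism. Apply the discrete-time analogue, Theorem 4.5 in \cite{KG-R}, to obtain a Borel set $H$ with $\mu(H)=1$ on which the discrete FK-pseudometric of $\vphi^t$ vanishes. Proposition \ref{timet_FK} then upgrades this directly to $\fkflow(x,y)=0$ for all $x,y\in H$, completing the direction.

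For the converse, the starting point is Ratner's criterion (Theorem \ref{ratnerLK}): it suffices to verify $e(\flowF,u)=0$ for every $u\in U$. Given $H$ with $\mu(H)=1$ and $\fkflow\equiv 0$ on $H$, I would first intersect $H$ with $\gen(\mu)$ (by Proposition \ref{generic_points} this still has full measure). The plan is then: (i) approximate any partition $\SP$ by an essentially open partition $\SQ$ via Proposition \ref{essent_open_part}; (ii) use Lemma \ref{key_lemma} to relate $(t,\SP)$-balls to $(t,\SQ)$-balls on a set of measure arbitrarily close to $1$, so that, after a standard adjustment moving ball centers into this set (using the quasi-triangle bound $f_t(z,w,\SQ)\leq 5\vep$ whenever $z,w\in B_t(x,\vep,\SQ)$), the cover number $K_t(\vep,\SP)$ is bounded above by $K_t(\vep',\SQ)+O(1)$ for some $\vep'$ controlled by $\vep$ and $d_\mu(\SP,\SQ)$. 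This reduces the problem to showing $e(u,\SQ)=0$ for every essentially open $\SQ$.

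For essentially open $\SQ$ the goal is to show that a single $(t,\SQ)$-ball of radius $\vep$ centered at any fixed $x_0\in H$ already covers measure $>1-\vep$ for all large $t$, hence $K_t(\vep,\SQ)=1$ and $e(u,\SQ)=0$. With open cells $Q_1,\dots,Q_n$ and residual cell $Q_{n+1}$, introduce the boundary zone
\[
B_\gamma \;=\; Q_{n+1} \,\cup\, \bigcup_{i=1}^n \{z\in Q_i : d(z,X\setminus Q_i)\leq \gamma\},
\]
whose measure shrinks to $\mu(Q_{n+1})$ as $\gamma\to 0$. Given $y\in H$, I would choose $\delta<\gamma$ with $\tilde{f}_\delta(x_0,y)<\delta$ (possible since $\fkflow(x_0,y)=0$), take $t$ large enough that the Birkhoff averages $\mu_{x_0,t}(B_\gamma)$ and $\mu_{y,t}(B_\gamma)$ are close to $\mu(B_\gamma)$, and restrict a $\wt{(t,\delta,\delta)}$-matching $h$ between $x_0$ and $y$ to the times when both orbits avoid $B_\gamma$. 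On this restricted set the inequality $d(\smap(x_0),\hsmap(y))<\delta<\gamma$ forces $\SQ(\smap(x_0))=\SQ(\hsmap(y))$, producing a $(t,\vep,\SQ)$-matching. Hence $f_t(x_0,y,\SQ)\to 0$ pointwise on $H$, and an Egorov argument applied to the increasing family $G_M=\{y\in H : f_t(x_0,y,\SQ)<\vep\ \text{for all}\ t\geq M\}$ delivers the desired single-ball cover.

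The main obstacle is the passage from pointwise convergence $f_t(x_0,\cdot,\SQ)\to 0$ on $H$ to a uniform cover, since the threshold $\delta$ realizing $\fkflow(x_0,y)=0$ is a priori $y$-dependent. Handling this requires measurability of the sets $G_M$ (extractable from the slow $t$-dependence of $f_t$, akin to the second Fact of Section 3.2) and careful bookkeeping of constants so that the $4$-to-$1$ loss in Lemma \ref{key_lemma} and the factor-of-$2$ loss in Proposition \ref{FK_measure} do not ruin the final matching radius in the approximation step.
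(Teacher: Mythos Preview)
Your plan matches the paper's proof almost step for step: the ``only if'' direction via an ergodic time-$t$ map, Theorem \ref{timet_LK}, the discrete result of \cite{KG-R}, and Proposition \ref{timet_FK} is exactly what the paper does; and the converse via Ratner's criterion, approximation by an essentially open partition, Lemma \ref{key_lemma}, and an Egorov step is also the paper's route. Two minor differences: the paper applies Lemma \ref{key_lemma} at the very end (bounding $f_t(x_0,y,\SP)$ directly once $f_t(x_0,y,\SQ)$ is small) rather than as a preliminary reduction of cover numbers, and instead of your boundary zone $B_\gamma$ it enlarges the residual cell $Q_0$ to an open $V_0$ with $\mu(V_0)<\vep$ and takes $\de$ to be a Lebesgue number for the cover $\{V_0,Q_1,\dots,Q_n\}$, so that only the $x_0$-orbit (not both orbits) needs to avoid the bad set.

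Your ``main obstacle'' is not one. From $\fkflow(x_0,y)=0$ and monotonicity of $\tilde f_\de$ in $\de$ one gets $\tilde f_\de(x_0,y)=0$ for \emph{every} $\de>0$ and every $y\in H$, so $\de$ may be fixed once and for all (e.g.\ as the Lebesgue number above) independently of $y$. The paper then simply observes that $\tilde f_{t,\de}(x_0,\cdot)$ is continuous, hence $\tilde f_\de(x_0,\cdot)$ is measurable, and applies Egorov to pass from $\limsup_t \tilde f_{t,\de}(x_0,y)=0$ on $H$ to a uniform threshold $t_2$ on a set of measure $>1-\de$; no bookkeeping of $y$-dependent thresholds is needed.
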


	\begin{proof}
		Fix an ergodic time-$t$ map $\tmap$ of a loosely Kronecker flow $(X, \flowF, \mu)$. It follows from Theorem \ref{timet_LK} that $\tmap$ is a loosely Kronecker transformation. 
		
		By the discrete-time result analogous to Theorem \ref{thmA} (Theorem 4.5 in \cite{KG-R}) we obtain a Borel set $H \subseteq X$ with $\mu(H)=1$ such that $\fk(x,y)=0$ with respect to $\tmap$ for every $x,y \in H$ . By Proposition \ref{timet_FK} we obtain $\fkflow(x,y) = 0$ for every $x,y \in H$.
		
		For the converse we fix $\vep >0$, a partition $\SP \in \Part(X)$ and a Borel set $H_0\subseteq X$ such that $\mu(H_0) = 1$ and $\fkflow(x,y)=0$ for every $x,y \in H_0$. 
		
		From the density of essentially open partitions there exists an $\vep$-essentially open partition $\SQ=\{Q_0,\ldots, Q_{n}\}$ such that $d_\mu(\SP, \SQ)<\vep$. 
		We assume without loss of generality that $Q_0$ is the compact cell and take an open set $V_0$ containing $Q_0$ such that $\mu(V_0)< \vep$.
		
		Let $0<\de<\vep$ be a Lebesgue number for the open cover $\SW = \{V_0, Q_1,\ldots, Q_{n}\}$ of $X$.
		
		Since $\mu$ is ergodic, there exist a compact set $H_1 \subseteq \gen(\mu) \cap H_0$ and $t_1 >1$ such that $\mu(H_1)> 1-\de$ and $\mu_{x,t}(V_0)< \vep$ for every $t \geq t_1$ and $x \in H_1$.
		
		Fix $x_0 \in H_1$. We observe that $\tilde{f}_{t,\de}(x_0, \cdot)$ is continuous for every $t \geq 1$, so $\tilde{f}_\de(x_0, \cdot)$ is measurable. Thus there exist a compact set $H_2 \subseteq H_1$ and $t_2 \geq t_1$ such that $\mu(H_2)>1-\de $ and $\tilde{f}_{t,\de}(x_0,y)<\de$ for every $t \geq t_2$ and $y \in H_2$.
		
		For $t \geq t_2$ and $y \in H_2$ let $h \colon A_0 \to A_0'$ be a $\wt{(t,\de,\de)}$-matching between $x_0$ and $y$. Set $A \subseteq A_0$ as
		$$A = \{s \in A_0 : \smap(x_0) \notin V_0\}.$$
		
		Since all elements of the cover $\SW$, except possibly $V_0$, are cells of the partition $\SQ$ and $\de$ is a Lebesgue number for $\SW$, for every $s \in A$ we have $\SQ(\smap(x_0)) = \SQ(\hsmap(y))$.
		
		Using an argument similar to the one in the proof of Lemma \ref{key_lemma} together with the fact that $\de < \vep$ we can see that $h|_A  \colon A \to h(A)$ is a $(t, 3\vep, \SQ)$-matching between $x_0$ and~$y$.
		
		Lastly, we apply Lemma \ref{key_lemma} and assume that the obtained set $H$ is contained in $H_2$ and $t_0 \geq t_2$. So, for every $y \in H$ and $t \geq t_0$ we have
		$$f_t(x_0,y,\SP)<4f_t(x_0,y,\SQ) + 2d_\mu(\SP,\SQ) + \de < 15 \vep.$$
		
		This means that $H \subseteq B_t(x_0,15\vep, \SP)$, and consequently, $K_t(15\vep,\SP) = 1$ for every $t \geq t_0$. Since $\SP \in \Part(X)$ and $\vep>0$ were arbitrary, we have $e(\flowF,u) = 0$ for every $u \in U$. Using Theorem \ref{ratnerLK} we conclude that $(X, \flowF, \mu)$ is loosely Kronecker.
	\end{proof}
	
	\section{Topologically loosely Kronecker flows}\label{topologically_loosely_Kronecker_section}
	
	The goal of this section is to prove Theorem \ref{thmB}. We begin defining topologically loosely Kronecker flows in analogy with the notion introduced for discrete-time systems in \cite{KG-R}.
	
	\begin{defi}
		A continuous flow is said to be \textbf{topologically loosely Kronecker} if $\fkflow(x,y) = 0$ for every $x,y \in X$.
	\end{defi}
	
	The following lemma is a direct consequence of Proposition \ref{generic_points}.
	
	\begin{lemma}\label{TLK_unique}
		Every topologically loosely Kronecker flow is uniquely ergodic.
	\end{lemma}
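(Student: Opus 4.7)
The plan is a short contradiction argument using Proposition \ref{generic_points}. First, since $X$ is compact and $\flowF$ is continuous, the Krylov--Bogoliubov theorem supplies at least one $\flowF$-invariant Borel probability measure, and the ergodic decomposition then guarantees that $\Erg(X) \neq \emptyset$. So the only thing to rule out is that $\Erg(X)$ contains two distinct measures.

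Suppose for contradiction that $\mu_1, \mu_2 \in \Erg(X)$ with $\mu_1 \neq \mu_2$. By the Birkhoff ergodic theorem we have $\mu_i(\gen(\mu_i)) = 1$ for $i=1,2$, so I may pick $x_i \in \gen(\mu_i)$. Because $\flowF$ is topologically loosely Kronecker, $\fkflow(x_1, x_2) = 0$. Proposition \ref{generic_points} then forces $x_2 \in \gen(\mu_1)$. But a point cannot be generic for two different measures: if $\mu_{x_2,t} \to \mu_1$ and $\mu_{x_2,t} \to \mu_2$ in the weak* topology, then $\mu_1 = \mu_2$ by uniqueness of weak* limits, contradicting $\mu_1 \neq \mu_2$.

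Hence $\Erg(X)$ is a singleton, say $\{\mu\}$. Since $\Mes(X)$ equals the closed convex hull (in the weak* topology) of $\Erg(X)$, we get $\Mes(X) = \{\mu\}$, i.e.\ $\flowF$ is uniquely ergodic.

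There is no real obstacle here; the whole content is isolated in Proposition \ref{generic_points}, and the only small care to take is to explicitly note that $\Erg(X)$ is nonempty (Krylov--Bogoliubov plus ergodic decomposition) before running the two-measure contradiction.
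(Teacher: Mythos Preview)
Your proof is correct and follows exactly the approach the paper intends: the paper states the lemma as a direct consequence of Proposition \ref{generic_points}, and your contradiction argument via generic points of two putative ergodic measures is precisely how that consequence unfolds. The extra care you take to note $\Erg(X) \neq \emptyset$ is a small but welcome addition.
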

	
	Using results presented above we can restate and prove Theorem \ref{thmB}.
	
	\begin{Thm}
		A continuous flow $\flowF$ on $X$ is topologically loosely Kronecker if and only if $\flowF$ is uniquely ergodic and $(X,\flowF, \mu)$ is loosely Kronecker, where $\{\mu\}~=~ \Erg(X)$.
	\end{Thm}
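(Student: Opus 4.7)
The plan is to establish both directions by leveraging Theorem \ref{thmA} together with the discrete-to-continuous passage through time-$t$ maps. The forward implication is essentially immediate: if $\flowF$ is topologically loosely Kronecker, then Lemma \ref{TLK_unique} gives unique ergodicity, and the Borel set $H = X$ trivially satisfies $\mu(H) = 1$ and the zero FK-distance condition, so Theorem \ref{thmA} yields that $(X, \flowF, \mu)$ is loosely Kronecker.

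For the converse I assume that $\flowF$ is uniquely ergodic with $\Erg(X) = \{\mu\}$ and that $(X, \flowF, \mu)$ is loosely Kronecker. The strategy is to reduce to the discrete-time analogue (Theorem 4.5 in \cite{KG-R}) via a well chosen time-$t$ map. Proposition \ref{Unique_ergodic_tmap} and the Pugh--Shub theorem each exclude only countably many $t \in \R$, so I can pick $t$ for which $\tmap$ is uniquely ergodic and $(X, \tmap, \mu)$ is ergodic. Since $\mu$ is $\tmap$-invariant, unique ergodicity of $\tmap$ forces $\mu$ to be its unique invariant probability measure. By Theorem \ref{timet_LK}, $(X, \tmap, \mu)$ is then a loosely Kronecker transformation.

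Applying the discrete-time analogue (Theorem 4.5 in \cite{KG-R}) to the uniquely ergodic transformation $\tmap$ with loosely Kronecker unique invariant measure yields that $\tmap$ is topologically loosely Kronecker, meaning $\fk(x,y) = 0$ for every $x,y \in X$ with respect to $\tmap$. Proposition \ref{timet_FK} then lifts this to $\fkflow(x,y) = 0$ for every $x,y \in X$, so $\flowF$ is topologically loosely Kronecker.

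The main point requiring care is the simultaneous selection of $t$: I need a single value for which $\tmap$ is uniquely ergodic, $(X, \tmap, \mu)$ is ergodic, and the resulting transformation is loosely Kronecker. This is handled by intersecting the two co-countable sets of good times coming from Proposition \ref{Unique_ergodic_tmap} and Pugh--Shub, and then appealing to Theorem \ref{timet_LK} to transfer loose Kroneckerness from the flow to the time-$t$ map. Beyond this selection, the proof is essentially a bookkeeping exercise chaining together tools already developed in the paper.
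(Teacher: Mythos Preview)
Your proof is correct and follows essentially the same route as the paper's: forward direction via Lemma \ref{TLK_unique} and Theorem \ref{thmA}, converse via a uniquely ergodic time-$t$ map, Theorem \ref{timet_LK}, the discrete-time analogue from \cite{KG-R}, and Proposition \ref{timet_FK}. Two minor remarks: the appeal to Pugh--Shub is redundant (unique ergodicity of $\tmap$ already forces $\mu$ to be ergodic for $\tmap$), and the discrete-time result you need in the converse is the analogue of Theorem \ref{thmB} (Theorem 5.7 in \cite{KG-R}), not Theorem 4.5, which is the analogue of Theorem \ref{thmA}.
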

	
	\begin{proof} 
		If $\flowF$ is topologically loosely Kronecker, then, by Lemma \ref{TLK_unique}, it must be uniquely ergodic. Let $\{\mu\} = \Erg(X)$. Then $(X,\flowF, \mu)$ must be loosely Kronecker by Theorem \ref{thmA}.
		
		Conversely, if $\flowF$ is uniquely ergodic and $(X, \flowF, \mu)$ is loosely Kronecker with $\mu$ being its unique ergodic measure, then we can take a uniquely ergodic time-$t$ map whose existence is guaranteed by the Proposition \ref{Unique_ergodic_tmap} and, by Theorem \ref{timet_LK}, the measure preserving system $(X, \tmap, \mu)$ is loosely Kronecker.
		
		By the discrete-time result analogous to Theorem \ref{thmB} (Theorem 5.7 in \cite{KG-R}), the map $\tmap$ is topologically loosely Kronecker, that is, $\fk(x,y) = 0$ for every $x,y \in X$. Using Proposition \ref{timet_FK} we obtain $\fkflow(x,y)=0$ for every $x,y \in X$.
	\end{proof}	
	
	\section{Examples and applications}\label{applications}
	
	Horocycle flows on surfaces of constant negative curvature are probably the most studied examples of topologically loosely Kronecker flows (see \cite{furstenberg_uniquely_ergodic}, \cite{Marcus_horocycle_erg_prop},  \cite{Marcus_horocycle_mixing},  \cite{Ratner_horocycle}, \cite{Ratner_Cartesian} and others). There are some other famous classes, such as strictly ergodic distal flows, that are also known to be quasi-isometric (see \cite{Furstemberg_distal}). Since the class of loosely Kronecker flows is closed under isometric extensions (see Chapter 7 in \cite{ORW}), strictly ergodic distal flows must be loosely Kronecker with respect to its unique ergodic measure. 
	
	The measure-theoretical theory of discrete-time loosely Kronecker systems suggests that suspension flows are a natural source of examples for topologically loosely Kronecker flows. This is actually true and it follows from Corollary \ref{suspension_LK}.
	
	\begin{Prop}
		Suspension flows over topologically loosely Kronecker homeomorphisms are topologically loosely Kronecker.
	\end{Prop}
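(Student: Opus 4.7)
The plan is to deduce this proposition immediately from Corollary \ref{suspension_LK}. Let $T \colon X \to X$ be a topologically loosely Kronecker homeomorphism and let $\flowF$ denote the suspension flow over $T$ acting on $X_T$. Fix two arbitrary points $u, v \in X_T$. Using the presentation $X_T = X \times [0,1) / \sim$, write $u = (x, r)$ and $v = (y, s)$ with $x, y \in X$ and $r, s \in [0, 1)$. By the hypothesis that $T$ is topologically loosely Kronecker, $\fk(x, y) = 0$ with respect to $T$. Corollary \ref{suspension_LK} then applies to the pair $x, y$ and the heights $r, s$, yielding $\fkflow(u, v) = \fkflow((x, r), (y, s)) = 0$.

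Since $u, v \in X_T$ were arbitrary, this shows that $\fkflow$ vanishes on $X_T \times X_T$, which is precisely the definition of $\flowF$ being topologically loosely Kronecker. There is no genuine obstacle in this argument: the content of the proposition has already been absorbed into Corollary \ref{suspension_LK}, whose proof in turn rested on Proposition \ref{timet_FK} comparing the discrete FK-pseudometric with respect to the time-$1$ map and $\fkflow$. The role of the present proposition is thus chiefly to record the global consequence in a form that parallels the other examples (horocycle flows, strictly ergodic distal flows) listed in this section as sources of topologically loosely Kronecker flows.
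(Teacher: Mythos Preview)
Your proof is correct and follows essentially the same approach as the paper: both arguments take arbitrary points $(x,r),(y,s)\in X_T$, use the hypothesis to get $\fk(x,y)=0$, and then invoke Corollary \ref{suspension_LK} directly. The additional commentary you provide is accurate but not part of the paper's version.
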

	
	\begin{proof}
		If $T:X \to X$ is topologically loosely Kronecker, then $\fk(x,y) =0$ for every $x,y \in X$. By Corollary \ref{suspension_LK} we have $\fkflow((x,r),(y,s))=0$ for every $(x,r) , (y,s) \in~X_T$. 
	\end{proof}
	
	The following corollary is a direct consequence of Kakutani equivalence of special flows over the same automorphism(Proposition 2.2 in \cite{Katok_discrete_spec}). We just need to assure that the special flow is indeed a continuous flow.
	
	\begin{corollary}
		Special flows over topologically loosely Kronecker homeomorphisms are topologically loosely Kronecker.
	\end{corollary}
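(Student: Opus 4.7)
The plan is to reduce to Theorem \ref{thmB}. So the target is to verify two things about the special flow $\Psi$ over a topologically loosely Kronecker homeomorphism $T\colon X \to X$ with continuous roof function $\al\colon X \to \R^+\setminus\{0\}$: first, that $\Psi$ is uniquely ergodic, and second, that its unique invariant measure is loosely Kronecker in the measure-theoretic sense.

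To set up, I would first invoke the discrete-time analogue of Theorem \ref{thmB} from \cite{KG-R}: the hypothesis that $T$ is topologically loosely Kronecker implies $T$ is uniquely ergodic and that $(X, T, \mu)$ is loosely Kronecker, where $\{\mu\} = \Erg(X)$. Next I would verify that $\Psi$ on $X^\al_T$ is indeed a continuous flow --- this is the technical detail the author highlights and follows directly from continuity of $\al$ and of $T$, together with the quotient construction defining $X^\al_T$. Standard arguments (e.g.\ pushforward of $\mu\times\la$ normalized on the fundamental domain under $\al$) then yield unique ergodicity of $\Psi$ from unique ergodicity of $T$ together with continuity and positivity of $\al$; call the resulting unique invariant measure $\nu$.

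For loose Kronecker-ness of $(X^\al_T, \Psi, \nu)$, I would appeal to Katok's Proposition 2.2 in \cite{Katok_discrete_spec}, which asserts that any two special flows over the same base automorphism are Kakutani equivalent. Applying this to $\Psi$ and the suspension flow $\flowF$ over $T$, we get that $(X^\al_T, \Psi, \nu)$ is Kakutani equivalent to $(X_T, \flowF, \bar\mu)$, where $\bar\mu$ is the measure on $X_T$ induced by $\mu \times \la$. The previous proposition in the paper (and Theorem \ref{thmB} applied to $\flowF$) tells us that $(X_T, \flowF, \bar\mu)$ is loosely Kronecker, i.e.\ Kakutani equivalent to a Kronecker flow. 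Since Kakutani equivalence is transitive, $(X^\al_T, \Psi, \nu)$ is Kakutani equivalent to a Kronecker flow as well, hence loosely Kronecker.

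With both ingredients in hand, I would conclude by a direct application of Theorem \ref{thmB}: $\Psi$ is uniquely ergodic with loosely Kronecker unique invariant measure $\nu$, hence $\Psi$ is topologically loosely Kronecker. The main obstacle I anticipate is bookkeeping rather than mathematical depth: one must confirm that Katok's Kakutani equivalence result, stated in the measure-theoretic category, applies to the continuous model we have built, and that the unique ergodic measure of $\Psi$ is the one corresponding to $\mu$ under this construction, so that the Kakutani equivalence really is between $(X^\al_T, \Psi, \nu)$ and $(X_T, \flowF, \bar\mu)$ rather than between some other pair of measure-preserving systems.
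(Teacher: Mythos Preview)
Your proposal is correct and follows essentially the same route as the paper. The paper's argument is terser---it simply says the corollary is a direct consequence of Katok's Proposition 2.2 (Kakutani equivalence of special flows over the same base) together with the preceding proposition on suspension flows, noting only that continuity of the special flow must be checked---but unpacking this yields exactly your chain: suspension flow is topologically loosely Kronecker, hence (via Theorem \ref{thmB}) uniquely ergodic with loosely Kronecker measure; Katok's result transfers loose Kronecker-ness to the special flow; unique ergodicity passes to the special flow; Theorem \ref{thmB} closes the loop. Your explicit verification of unique ergodicity of $\Psi$ and your remark about matching the measures are details the paper leaves implicit.
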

	
	A natural question is whether dynamical properties persist after time-changes. While some properties such as topological mixing might not be preserved, others as transitivity and minimality are preserved. Being topologically loosely Kronecker is in the latter class and as expected from the definition of Kakutani equivalence it remains preserved by continuous time-changes.
	
	\begin{Prop}
		Continuous time-changes of topologically loosely Kronecker flows are topologically loosely Kronecker.
	\end{Prop}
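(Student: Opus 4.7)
The plan is to exploit the fact that $\Psi$ and $\flowF$ have exactly the same orbits and differ only by a continuous reparametrization along those orbits, so any $\wt{(t,\vep,\de)}$-matching for $\flowF$ should convert into a matching for $\Psi$ by composing with the time-change. Concretely, writing $V_z(r)=\int_0^r \al(\vphi^u(z))\,du$ for the function converting $\flowF$-time to $\Psi$-time along the orbit of $z$ (so $V_z^{-1}(s)=v(s,z)$), I will transform an $\flowF$-matching $\wt{h}\colon B\to B'$ into the candidate $\Psi$-matching
$$h\colon A\to A',\qquad h(s)=V_y(\wt{h}(V_x^{-1}(s))),\qquad A=V_x(B),\ A'=V_y(B').$$

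The first step is to observe that a topologically loosely Kronecker flow is uniquely ergodic by Lemma \ref{TLK_unique}, so by unique ergodicity the averages $V_z(S)/S$ converge uniformly in $z\in X$ to $\bar\al=\int\al\,d\mu>0$ as $S\to\infty$; moreover $\al$ is bounded, $0<m\le \al\le M$, and uniformly continuous with some modulus $\om$. The second step is the reduction: given $\vep'>0$ and $T$ large, I choose auxiliary parameters $\vep,\de>0$ sufficiently small in terms of $\vep',m,M,\bar\al,\om$, and I appeal to $\fkflow(x,y)=0$ for $\flowF$ to obtain, for $S$ slightly larger than $T/\bar\al$, a $\wt{(S,\vep,\de)}$-matching $\wt{h}\colon B\to B'$ between $x$ and $y$.

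The third step is to verify that the reparametrized map $h$ (restricted and truncated to $[0,T]$ if necessary) is a valid $\wt{(T,\vep',\vep')}$-matching for $\Psi$. The distance condition is automatic: by construction $v(h(s),y)=\wt{h}(V_x^{-1}(s))$, so $\psi^{h(s)}(y)=\vphi^{\wt{h}(r)}(y)$ and $\psi^s(x)=\vphi^r(x)$ with $r=V_x^{-1}(s)\in B$, giving $d(\psi^s(x),\psi^{h(s)}(y))<\de<\vep'$. The derivative condition reduces, by the chain rule, to
$$h'(s)=\wt{h}'(r)\cdot\frac{\al(\vphi^{\wt{h}(r)}(y))}{\al(\vphi^r(x))},$$
and both factors are close to $1$: the first because $|\wt{h}'(r)-1|<\vep$, and the second because $\al$ is uniformly continuous, bounded away from $0$, and the two arguments are within distance $\de$.

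The main obstacle is the measure-of-domain condition $\la(A),\la(A')>(1-\vep')T$. Converting $\la(B)>(1-\vep)S$ into a lower bound on $\la(A)=\int_B \al(\vphi^u(x))\,du$ relative to $T$ requires \emph{two-sided uniform} control of $V_x(S)$ and $V_y(S)$, and this is precisely where unique ergodicity is used: for $S$ large, $V_x(S)$ and $V_y(S)$ both lie in $(\bar\al-\eta,\bar\al+\eta)S$ uniformly in the basepoint, so combining this with the trivial bound $\int_{B^c}\al\,du\le M\vep S$ yields $\la(A)\ge (\bar\al-\eta-M\vep)S$, which exceeds $(1-\vep')T$ once $\eta$ and $\vep$ are small enough relative to $\vep'$ and $S\approx T/\bar\al$. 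The symmetric estimate handles $A'$, and a small truncation absorbs the mismatch between $[0,V_x(S)],[0,V_y(S)]$ and $[0,T]$. Since $\vep'>0$ was arbitrary, $\fkflow^{\Psi}(x,y)=0$, completing the proof.
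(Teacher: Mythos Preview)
Your argument is correct but follows a genuinely different route from the paper's. You work directly at the level of matchings: writing $V_z(r)=\int_0^r\al(\vphi^u(z))\,du$, you push a $\flowF$-matching forward through the reparametrizations $V_x,V_y$ to obtain a $\Psi$-matching, checking the distance condition (immediate), the derivative condition (via the chain rule and uniform continuity of $\al$), and the measure-of-domain condition (via the uniform convergence $V_z(S)/S\to\bar\al$, which is where unique ergodicity of $\flowF$ from Lemma~\ref{TLK_unique} is used). The paper instead argues softly through the characterization: by Theorem~\ref{thmB} a topologically loosely Kronecker flow is uniquely ergodic with loosely Kronecker unique measure; continuous time-changes preserve unique ergodicity by a result of Marcus, and the time-changed flow is by definition Kakutani equivalent to the original, hence its unique measure is again loosely Kronecker; a second application of Theorem~\ref{thmB} closes the loop. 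The paper's proof is shorter and highlights that the proposition is essentially a corollary of Theorem~\ref{thmB} together with standard facts, at the cost of importing Marcus's theorem and the Kakutani machinery. Your approach is more self-contained---it never invokes Theorem~\ref{thmB} (only the easy Lemma~\ref{TLK_unique}), nor does it need to know in advance that $\Psi$ is uniquely ergodic---and it makes explicit how matchings transform under a continuous reparametrization; the quantitative bookkeeping you set up could also be reused if one wanted finer control, e.g.\ tracking how $\fkflow$ degrades under time-changes whose speed is not close to constant.
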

	
	\begin{proof}
		We remark that unique ergodicity is preserved under continuous time-changes for non-singular flows (Corollary 2 in \cite{Marcus_uniquely_ergodic}). Moreover, a continuous time-change of a flow is obviously Kakutani equivalent to itself, so the obtained flow is uniquely ergodic and loosely Kronecker with respect to its unique measure. Therefore, by Theorem \ref{thmB}, it is topologically loosely Kronecker.
	\end{proof}
	
	In Section 3 we mentioned that vanishing of the FK-distance for flows might not be preserved after passing to time-$t$ maps. The measure-theoretical weak mixing property is used to prove that.
	
	\begin{defi}
		We say that a measure-preserving flow $(X, \flowF, \mu)$ is \textbf{weakly mixing} if $\flowF \times \flowF$ is ergodic.
	\end{defi}
	
	Weak mixing has some equivalent forms, one of them (also used by Ratner in \cite{Ratner_invariants}) is the following.
	
	\begin{Prop}[Proposition 3.4.40 in \cite{Hasselblatt-Fisher}]
		Let $\flowF$ be a continuous flow and $\mu \in \Erg(X)$. Then $(X, \flowF, \mu)$ is weakly mixing if and only if every time-$t$ map is ergodic.
	\end{Prop}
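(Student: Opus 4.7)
My plan is to reduce both directions of this classical equivalence to the standard spectral characterization of weak mixing: for an ergodic measure-preserving flow, being weakly mixing is equivalent to the absence of nonconstant eigenfunctions of the Koopman one-parameter group $U^t f = f \circ \vphi^t$ on $L^2(X,\mu)$, meaning the only $f \in L^2$ satisfying $f \circ \vphi^t = e^{2\pi i \la t} f$ for every $t \in \R$ (with some $\la \in \R$) is a constant, forcing $\la = 0$. I would invoke this equivalence as a prerequisite.

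For the direction ($\Leftarrow$), assume every time-$t$ map is ergodic. Suppose for contradiction that $\flowF$ is not weakly mixing, so there exist a nonconstant $f \in L^2$ and $\la \neq 0$ with $f \circ \vphi^t = e^{2\pi i \la t} f$ for all $t$. Then $t_0 = 1/\la$ yields $f \circ \vphi^{t_0} = f$, producing a nonconstant $\vphi^{t_0}$-invariant function, which contradicts ergodicity of $\vphi^{t_0}$.

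For the direction ($\Rightarrow$), assume weak mixing (which implies ergodicity of $\flowF$). Fix $t > 0$ and a $\vphi^t$-invariant $f \in L^2$. For $\mu$-a.e.\ $x$, the map $s \mapsto f(\vphi^s(x))$ lies in $L^2([0,t])$ and is $t$-periodic, so I would expand it in its Fourier series with coefficients
\[ F_k(x) = \frac{1}{t}\int_0^t e^{-2\pi i k s/t} f(\vphi^s(x))\, ds, \qquad k \in \Z. \]
A change of variables combined with $t$-periodicity of the integrand gives $F_k \circ \vphi^u = e^{2\pi i k u/t} F_k$, so each $F_k$ is an eigenfunction of the flow with eigenvalue $k/t$. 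Weak mixing forces $F_k$ to vanish for all $k \neq 0$, while ergodicity identifies $F_0$ with the constant $\int f\, d\mu$. Plancherel on $[0,t]$, followed by integration in $x$ and $\vphi^s$-invariance of $\mu$, then yields $\|f\|_{L^2}^2 = \bigl|\int f\, d\mu\bigr|^2$, so $f$ is $\mu$-a.e.\ constant, establishing ergodicity of $\vphi^t$.

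The main obstacle lies in the harder direction: one must justify measurability of the fiberwise Fourier coefficients $F_k$ in $x$ via Fubini, verify that they define honest $L^2(X,\mu)$-eigenfunctions of the full flow (not merely fiberwise quantities), and interpret the $L^2$-convergence of the Fourier series carefully enough that Plancherel genuinely recovers $f$. Apart from this bookkeeping, the proof is short once the spectral equivalence of weak mixing is taken for granted.
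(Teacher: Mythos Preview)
The paper does not prove this proposition; it is simply quoted as Proposition 3.4.40 from Hasselblatt--Fisher and used as a black box. There is therefore nothing to compare your argument against.

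Your argument is the standard spectral proof and is correct. The easy direction is fine as written. In the harder direction your Fourier-coefficient construction is the textbook route: Fubini applied to $\int_X\int_0^t |f(\vphi^s x)|^2\,ds\,d\mu = t\|f\|_2^2$ gives that $s\mapsto f(\vphi^s x)$ lies in $L^2([0,t])$ for $\mu$-a.e.\ $x$, so the $F_k$ are well defined and belong to $L^2(X,\mu)$; the eigenfunction relation $F_k\circ\vphi^u = e^{2\pi i k u/t}F_k$ follows from the change of variables you indicate; weak mixing kills $F_k$ for $k\neq 0$ and ergodicity identifies $F_0$ with the constant $\int f\,d\mu$; and Parseval on $[0,t]$ followed by integration in $x$ yields the equality case of Jensen. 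The bookkeeping you flag as the ``main obstacle'' is genuine but routine, and you have identified exactly the points that need a line of justification.
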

	
	So from Theorem \ref{timet_LK} we obtain the following corollary.
	
	\begin{corollary}
		Every time-$t$ map for $t \neq 0$ of a weakly mixing loosely Kronecker continuous flow is loosely Kronecker. 
	\end{corollary}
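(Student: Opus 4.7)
The plan is to combine the two results stated immediately above the corollary, which together handle the statement almost mechanically. First I would fix a weakly mixing loosely Kronecker continuous flow $(X, \flowF, \mu)$ with $\mu \in \Erg(X)$, and fix an arbitrary real number $t \neq 0$. The goal is to show that the time-$t$ map $\tmap$, together with the invariant measure $\mu$, is a loosely Kronecker transformation in the discrete-time sense.

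Next I would apply the characterization of weak mixing (Proposition 3.4.40 in \cite{Hasselblatt-Fisher} as quoted in the excerpt): since $(X, \flowF, \mu)$ is weakly mixing, every time-$t$ map is ergodic with respect to $\mu$. In particular, the specific $\tmap$ I fixed is an ergodic automorphism of the Lebesgue space $(X, \mu)$. At this point the hypothesis has been used only to upgrade ``ergodic for some time-$t$ map'' to ``ergodic for every time-$t$ map.''

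Finally I would invoke Theorem \ref{timet_LK}, which asserts that every ergodic time-$t$ map of a loosely Kronecker flow is itself loosely Kronecker. Since $\tmap$ is both ergodic (by the previous step) and arises as a time-$t$ map of the loosely Kronecker flow $\flowF$, this theorem applies and delivers the conclusion. Because $t \neq 0$ was arbitrary, the statement holds for every nonzero $t$, completing the proof.

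There is essentially no obstacle here beyond correctly citing the two ingredients: the only thing to check is that weak mixing is precisely what is needed to feed the ergodicity hypothesis into Theorem \ref{timet_LK} uniformly in $t$. No appeal to the FK-pseudometric, to Ratner's criterion, or to Theorem \ref{thmA} is required, so the corollary should be stated with a short two- or three-line proof that simply chains the Proposition and Theorem \ref{timet_LK}.
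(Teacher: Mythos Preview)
Your proposal is correct and matches the paper's approach exactly: the paper presents this corollary with no explicit proof, simply noting ``So from Theorem \ref{timet_LK} we obtain the following corollary,'' which is precisely the chaining of the weak-mixing characterization (ergodicity of all time-$t$ maps) with Theorem \ref{timet_LK} that you describe.
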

	
	We observe that the class of loosely Kronecker flows is not contained in the class of weakly mixing flows. For example, the suspension flow over an irrational rotation on the circle is loosely Kronecker by definition, and it is not weakly mixing (Proposition 3.4.9 in \cite{Hasselblatt-Fisher}). Since loosely Kronecker transformations are ergodic, another consequence of Theorem \ref{timet_LK} is the following.
	
	\begin{corollary}
		If a loosely Kronecker flow is not weakly mixing, then it has a non-loosely Kronecker time-$t$ map. In particular, there are topologically loosely Kronecker flows with non topologically loosely Kronecker time-$t$ maps.
	\end{corollary}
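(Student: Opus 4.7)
The plan is to prove the first assertion as a straightforward consequence of Theorem \ref{timet_LK}, and then to derive the ``in particular'' statement by exhibiting a concrete example via the suspension construction. For the first claim, I would assume that $(X,\flowF,\mu)$ is a loosely Kronecker measure-preserving flow that is not weakly mixing. By the characterization of weak mixing cited immediately above (Proposition 3.4.40 in \cite{Hasselblatt-Fisher}), there must exist $t_0 \neq 0$ such that $\varphi^{t_0}$ fails to be ergodic with respect to $\mu$. Since loosely Kronecker transformations are ergodic---as recalled in the paragraph preceding the corollary, because they are Kakutani equivalent to ergodic irrational rotations and Kakutani equivalence preserves ergodicity---the map $\varphi^{t_0}$ cannot itself be loosely Kronecker, which settles the first statement.

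For the ``in particular'' claim, I would take $R_\al$ an irrational rotation of the circle and let $\flowF$ be its suspension flow. The rotation $R_\al$ is topologically loosely Kronecker, so by the Proposition on suspensions established just above, $\flowF$ is topologically loosely Kronecker. This same suspension flow is, as observed in the paragraph preceding the corollary, loosely Kronecker but not weakly mixing (by Proposition 3.4.9 in \cite{Hasselblatt-Fisher}). Applying the first part of the corollary just proved then yields some $t_0 \neq 0$ for which $(X, \varphi^{t_0}, \mu)$ fails to be loosely Kronecker, where $\mu$ denotes the unique $\flowF$-invariant probability measure.

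The final step is to upgrade ``not loosely Kronecker'' to ``not topologically loosely Kronecker'' for this time-$t_0$ map. I would proceed by contradiction: if $\varphi^{t_0}$ were topologically loosely Kronecker as a homeomorphism, then the discrete-time analogue of Theorem \ref{thmB} (Theorem 5.7 in \cite{KG-R}) would force it to be uniquely ergodic and loosely Kronecker with respect to its unique invariant measure; since $\mu$ is $\varphi^{t_0}$-invariant, that unique measure is necessarily $\mu$, contradicting the conclusion of the preceding paragraph. The main---and essentially only---subtlety lies in carefully distinguishing the two levels (measure-theoretic vs.\ topological) of the loosely Kronecker property and invoking the discrete-time characterization from \cite{KG-R} to bridge them; the rest is a direct assembly of results already established in the paper.
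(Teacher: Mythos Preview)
Your proposal is correct and follows the same line of reasoning the paper leaves implicit: the first assertion is exactly the contrapositive argument combining the weak-mixing characterization with the ergodicity of loosely Kronecker transformations, and the ``in particular'' is witnessed by the suspension over an irrational rotation, as the paper indicates just before the corollary. The only place where you add something the paper does not spell out is the final bridge from ``$(X,\varphi^{t_0},\mu)$ not loosely Kronecker'' to ``$\varphi^{t_0}$ not topologically loosely Kronecker''; your contradiction via the discrete-time analogue of Theorem~\ref{thmB} is correct, though a slightly shorter route is to note directly that $\varphi^{t_0}$ is not $\mu$-ergodic, hence not uniquely ergodic (since $\mu$ is $\varphi^{t_0}$-invariant), hence not topologically loosely Kronecker by the discrete-time version of Lemma~\ref{TLK_unique}.
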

	
	\ack{The author would like to thank Philipp Kunde and Dominik Kwietniak for the fruitful conversations and the reviewers for the valuable comments that improved this work.}
	
	\bibliographystyle{alpha}
	\bibliography{references}

\begin{thebibliography}{GIKN05}

\bibitem[AK42]{Ambrose-Kakutani}
W.~Ambrose and S.~Kakutani.
\newblock Structure and continuity of measurable flows.
\newblock {\em Duke Math. J.}, 9:25--42, 1942.

\bibitem[BW72]{Bowen-Walters}
R.~Bowen and P.~Walters.
\newblock Expansive one-parameter flows.
\newblock {\em J. Differential Equations}, 12:180--193, 1972.

\bibitem[DE74]{Denker-Eberlein}
M.~Denker and E.~Eberlein.
\newblock Ergodic flows are strictly ergodic.
\newblock {\em Advances in Math.}, 13:437--473, 1974.

\bibitem[Dye59]{Dye_1}
H.~A. Dye.
\newblock On groups of measure preserving transformations. {I}.
\newblock {\em Amer. J. Math.}, 81:119--159, 1959.

\bibitem[Dye63]{Dye_2}
H.~A. Dye.
\newblock On groups of measure preserving transformations. {II}.
\newblock {\em Amer. J. Math.}, 85:551--576, 1963.

\bibitem[Fay00]{Fayad}
B.~R. Fayad.
\newblock Topologically mixing and minimal but not ergodic, analytic
  transformation on {$\bold T^5$}.
\newblock {\em Bol. Soc. Brasil. Mat. (N.S.)}, 31(3):277--285, 2000.

\bibitem[Fel76]{Feldman_LB}
J.~Feldman.
\newblock New {$K$}-automorphisms and a problem of {K}akutani.
\newblock {\em Israel J. Math.}, 24(1):16--38, 1976.

\bibitem[Fel80]{Feldman_rentropy}
J.~Feldman.
\newblock {$r$}-entropy, equipartition, and {O}rnstein's isomorphism theorem in
  {${\bf R}\sp{n}$}.
\newblock {\em Israel J. Math.}, 36(3-4):321--345, 1980.

\bibitem[FH19]{Hasselblatt-Fisher}
T.~Fisher and B.~Hasselblatt.
\newblock {\em Hyperbolic flows}.
\newblock Zurich Lectures in Advanced Mathematics. EMS Publishing House,
  Berlin, [2019] \copyright 2019.

\bibitem[Fur63]{Furstemberg_distal}
H.~Furstenberg.
\newblock The structure of distal flows.
\newblock {\em American Journal of Mathematics}, 85(3):477--515, 1963.

\bibitem[Fur73]{furstenberg_uniquely_ergodic}
H.~Furstenberg.
\newblock The unique ergodigity of the horocycle flow.
\newblock In A.~Beck, editor, {\em Recent Advances in Topological Dynamics},
  pages 95--115, Berlin, Heidelberg, 1973. Springer Berlin Heidelberg.

\bibitem[GIKN05]{KING}
A.~S. Gorodetski, Yu.~S. Ilyashenko, V.~A. Kleptsyn, and M.~B. Nalsky.
\newblock Nonremovability of zero {L}yapunov exponents.
\newblock {\em Funktsional. Anal. i Prilozhen.}, 39(1):27--38, 95, 2005.

\bibitem[GRK22]{KG-R}
F.~Garc\'{\i}a-Ramos and D.~Kwietniak.
\newblock On topological models of zero entropy loosely {B}ernoulli systems.
\newblock {\em Trans. Amer. Math. Soc.}, 375(9):6155--6178, 2022.

\bibitem[Kak43]{Kakutani}
S.~Kakutani.
\newblock Induced measure preserving transformations.
\newblock {\em Proc. Imp. Acad. Tokyo}, 19:635--641, 1943.

\bibitem[Kat77]{Katok_discrete_spec}
A.~B. Katok.
\newblock Monotone equivalence in ergodic theory.
\newblock {\em (Russian), Izv. Akad. Nauk SSSR Ser. Mat.}, 11(1):104--157,
  1977.

\bibitem[K{\L}17]{Kwietniak-Lacka}
D.~Kwietniak and M.~{\L}{\k a}cka.
\newblock Feldman-{K}atok pseudometric and the {GIKN} construction of
  nonhyperbolic ergodic measures.
\newblock {\em arXiv preprint arXiv:1702.01962}, 2017.

\bibitem[K{\L}O17]{Besicovitch_KLO}
D.~Kwietniak, M.~{\L}{\k a}cka, and P.~Oprocha.
\newblock Generic points for dynamical systems with average shadowing.
\newblock {\em Monatshefte f{\"u}r Mathematik}, 183(4):625--648, 2017.

\bibitem[Kom84]{Komuro}
M.~Komuro.
\newblock Expansive properties of {L}orenz attractors.
\newblock In {\em The theory of dynamical systems and its applications to
  nonlinear problems ({K}yoto, 1984)}, pages 4--26. World Sci. Publishing,
  Singapore, 1984.

\bibitem[KVW21]{KVW}
A.~Kanigowski, K.~Vinhage, and D.~Wei.
\newblock Kakutani equivalence of unipotent flows.
\newblock {\em Duke Math. J.}, 170(7):1517--1583, 2021.

\bibitem[Mar76]{Marcus_uniquely_ergodic}
B.~Marcus.
\newblock Reparameterizations of uniquely ergodic flows.
\newblock {\em Journal of Differential Equations}, 22(1):227--235, 1976.

\bibitem[Mar77]{Marcus_horocycle_erg_prop}
B.~Marcus.
\newblock Ergodic properties of horocycle flows for surfaces of negative
  curvature.
\newblock {\em Annals of Mathematics}, 105(1):81--105, 1977.

\bibitem[Mar78]{Marcus_horocycle_mixing}
B.~Marcus.
\newblock The horocycle flow is mixing of all degrees.
\newblock {\em Inventiones mathematicae}, 46:201--210, 1978.

\bibitem[Orn70a]{Ornstein_isomorphism}
D.~Ornstein.
\newblock Bernoulli shifts with the same entropy are isomorphic.
\newblock {\em Advances in Math.}, 4:337--352, 1970.

\bibitem[Orn70b]{Ornstein_isomorphism2}
D.~Ornstein.
\newblock Factors of {B}ernoulli shifts are {B}ernoulli shifts.
\newblock {\em Advances in Math.}, 5:349--364, 1970.

\bibitem[Orn70c]{Ornstein_isomorphism3}
D.~Ornstein.
\newblock Two {B}ernoulli shifts with infinite entropy are isomorphic.
\newblock {\em Advances in Math.}, 5:339--348, 1970.

\bibitem[ORW82]{ORW}
D.~Ornstein, D.~Rudolph, and B.~Weiss.
\newblock Equivalence of measure preserving transformations.
\newblock {\em Mem. Amer. Math. Soc.}, 37(262):xii+116, 1982.

\bibitem[PS71]{PS}
C.~Pugh and M.~Shub.
\newblock Ergodic elements of ergodic actions.
\newblock {\em Compositio Mathematica}, 23(1):115--122, 1971.

\bibitem[Rat78]{Ratner_horocycle}
M.~Ratner.
\newblock Horocycle flows are loosely {B}ernoulli.
\newblock {\em Israel J. Math.}, 31(2):122--132, 1978.

\bibitem[Rat79]{Ratner_Cartesian}
M.~Ratner.
\newblock The {C}artesian square of the horocycle flow is not loosely
  {B}ernoulli.
\newblock {\em Israel J. Math.}, 34(1-2):72--96, 1979.

\bibitem[Rat81]{Ratner_invariants}
M.~Ratner.
\newblock Some invariants of {K}akutani equivalence.
\newblock {\em Israel J. Math.}, 38(3):231--240, 1981.

\bibitem[Rat17]{Ratner_2017}
M.~Ratner.
\newblock Time change invariants for measure preserving flows.
\newblock In {\em Modern theory of dynamical systems}, volume 692 of {\em
  Contemp. Math.}, pages 263--273. Amer. Math. Soc., Providence, RI, 2017.

\bibitem[Rud78]{Rudolph_timet}
D.~Rudolph.
\newblock Classifying the isometric extensions of a {B}ernoulli shift.
\newblock {\em J. Analyse Math.}, 34:36--60, 1978.

\bibitem[Tho82]{Thomas_suspension_flow}
R.~Thomas.
\newblock Stability properties of one-parameter flows.
\newblock {\em Proceedings of the London Mathematical Society},
  s3-45(3):479--505, 1982.

\end{thebibliography}
	
\end{document}